\newtheorem{assumption}{Assumption}
\newtheorem{theorem}{Theorem}
\newtheorem{definition}[theorem]{Definition}
{\theorembodyfont{\rmfamily}
\newtheorem{example}[theorem]{Example}
}
\newtheorem{lemma}[theorem]{Lemma}
\newtheorem{proposition}[theorem]{Proposition}
{\theorembodyfont{\rmfamily}
\newtheorem{remark}[theorem]{Remark}
}
\newenvironment{proof}[1][Proof]{\noindent\textbf{#1} }{\ \rule{0.5em}{0.5em}}
\newcommand*\re{\mathbb{R}}
\newcommand*\Omegabar{\overline{\Omega}}
\newcommand*\delomega{\partial\Omega}
\newcommand*\intdelomega{\int_{\partial\Omega}}
\newcommand*\intomega{\int_{\Omega}}
\newcommand*\diw{\operatorname{div}}
\newcommand*\curl{\operatorname{curl}}
\newcommand*\p{\partial}
\begin{document}
\title{On the boundary conditions in estimating $\nabla \omega$ by $\diw \omega$ and $\curl \omega$}
\maketitle
\date{}

\centerline{\scshape Gyula Csat\'{o}$^1,$  Olivier Kneuss $^2$ and Dhanya Rajendran$^3$}
\medskip
{\footnotesize
\centerline{1 Departamento de Matem\'atica, Universidad de Concepci\'on, Concepcion, Chilel, supported by Fondecyt Grant Nr. 11150017}
\centerline{2 Departamento de Matem\'atica, Universidade Federal do Rio de Janeiro, Brasil.}
\centerline{3 Departamento de Ingener\'ia Matem\'atica, Universidad de Concepci\'on, Concepcion, Chile.}
   \centerline{gy.csato.ch@gmail.com,olivier.kneuss@gmail.com,dhanya.tr@gmail.com}

}

\smallskip

\begin{abstract}
In this paper we study under what boundary conditions the inequality
$$
  \|\nabla\omega\|_{L^2(\Omega)}^2\leq C\left(\|\curl\omega\|_{L^2(\Omega)}^2+ \|\diw\omega\|_{L^2(\Omega)}^2+\|\omega\|_{L^2(\Omega)}^2\right)
$$
holds true. It is known that such an estimate holds if either the tangential or normal component of $\omega$ vanishes on the boundary $\delomega.$ We show that the vanishing tangential component condition is a special case of a more general one. In two dimensions we give an interpolation result between these two classical boundary conditions.
\end{abstract}

\let\thefootnote\relax\footnotetext{\textit{2010 Mathematics Subject Classification.} Primary 26D10, Secondary 35Q30, 35Q61.}
\let\thefootnote\relax\footnotetext{\textit{Gaffney inequality, divergence and curl operators, tangential and normal components, boundary conditions.} }

\section{Introduction}\label{section:introduction}

In this paper we study the estimate
\begin{equation}\label{intro:eq:main}
  \|\nabla\omega\|_{L^2(\Omega)}^2\leq C\left(\|\curl\omega\|_{L^2(\Omega)}^2+ \|\diw\omega\|_{L^2(\Omega)}^2+\|\omega\|_{L^2(\Omega)}^2\right),
\end{equation}
where $\omega\in H^1(\Omega)^n$ is a vector field ($n=2,3$ in most applications) and $C$ is a constant independent of $\omega.$ $H^1(\Omega)^n$ denotes the Sobolev space of vector fields, whose components and all its derivatives are $L^2$ integrable. It is well known that such an estimate holds true if either the tangential or normal component of $\omega$ vanishes on the boundary $\delomega,$ which we shall call the classical boundary conditions. More precisely if $\nu$ is the unit exterior normal vector on $\delomega,$ then \eqref{intro:eq:main} holds true if
\begin{equation}
 \label{intro:eq:classical boundary cond}
  \nu\times\omega=0\quad\text{ on }\delomega\quad\text{ or }\quad\langle\nu;\omega\rangle=0\quad\text{ on }\delomega.
\end{equation}
These boundary conditions have been studied in great detail and the literature on it and its applications to physical systems, mainly Maxwell's equations and Navier-Stokes equations, is very large. Our aim is to show that some of these classical boundary conditions can be extended to much more general ones.
A particular case of our main result  gives in two dimensions an interpolation between the two classical boundary conditions
(cf. Remark \ref{remark:main.theorem.lambda} (ii)).

Let us first mention that inequality \eqref{intro:eq:main} cannot hold true without further restrictions on $\omega.$ To see this take any domain $\Omega\subset\re^2$ and define for  $n\in\mathbb{N}$
$$
  \omega_n(x)=\left(e^{nx_1}\cos(nx_2),-e^{nx_1}\sin(nx_2)\right).
$$
Then one easily verifies that $\diw\omega_n=0,$ $\curl\omega_n=0,$
$$
  |\nabla\omega_n(x)|^2=2n^2e^{2nx_1}\quad\text{ and }\quad|\omega_n(x)|^2=e^{2nx_1}.
$$
Hence there can be no constant $C$ independent of $n$ such that for all $n$
\begin{equation}\label{eq:intro:counterexample}
  2n^2\intomega e^{2nx_1}\leq C\intomega e^{2nx_1}.
\end{equation}
A similar example works also in higher dimensions.

Some standard references on \eqref{intro:eq:main} and its applications are Amrouche-Bernardi-Dauge-Girault \cite{Amrouche-Bernardi...}, Costabel \cite{Costabel 1991}, Dautray-Lions \cite{Dautray-Lions}, and Grisvard \cite{Grisvard Elliptic Pr in nonsmooth}. The inequality \eqref{intro:eq:main} has also been studied in the more general context of differential forms, where $\curl$ is replaced by the $d$ operator, respectively $\diw$ is replaced by $\delta.$ In this setting it is called Gaffney-Friedrichs inequality after Gaffney \cite{Gaffney1}, \cite{Gaffney2}, but for domains with boundary and the classical boundary conditions it is due to Morrey \cite{Morrey 1956}, \cite{MorreyEells}  or Friedrichs \cite{Friedrichs}. Proofs of this general version can also be found in Csat\'o-Dacorogna-Kneuss \cite{Csato-Dacorogna-Kneuss}, Iwaniec-Martin \cite{Iwaniec}, Morrey \cite{Morrey 1966}, Schwarz \cite{Schwarz}, Taylor \cite{Taylor}. Therefore we will call also \eqref{intro:eq:main} Gaffney inequality henceforth.

The first and simplest generalization of the boundary conditions (cf. Theorem \ref{theorem:mixed classical bdry conditions}) is by mixing the classical ones, namely requiring that  on some parts of the boundary the tangential part vanishes and on other parts the normal part vanishes. This result already seems to be known, see for instance Goldsthein-Mitrea \cite{Goldsthein-Mitrea} or Jakab-Mitrea-Mitrea \cite{Mitrea and others1} and the references therein. We state and indicate  a very simple proof of this result for completeness (cf. Theorem \ref{theorem:mixed classical bdry conditions}), since it does not appear explicitly in the references. 

First attempts to give more general boundary conditions have been obtained in  Csat\'o-Dacorogna \cite{Csato-Dacorogna 2010}, see also Csat\'o \cite{Csato Analysis} for a general version on Riemannian manifolds. There the authors have proven in particular that, in three dimensions, if $\lambda$ is a given fixed vector field, then there exsits a constant $C=C(\Omega,\lambda)$ such that \eqref{intro:eq:main} holds true if
$$
  \nu\times\omega=\lambda\langle\nu;\omega\rangle\quad\text{ on }\delomega.
$$
This generalizes the classical condition of vanishing tangential component by setting $\lambda=0.$

Our first main result is an even simpler generalization of this classical boundary condition (vanishing tangential component) which additionally has an obvious geometric interpretation. Namely Theorem \ref{theorem:lambda wedge omega for vector fields} asserts that \eqref{intro:eq:main} holds true if
\begin{equation}\label{eq:intro:lambda times main condition}
  \lambda\times\omega=0\quad\text{ on }\delomega,
\end{equation}
where again $C$ will depend on $\lambda$ and $\Omega.$ Geometrically this means that Gaffney inequality holds true whenever the vector fields $\omega$ are collinear with a given fixed vector field on $\delomega.$ This time, setting $\lambda=\nu$ gives the classical boundary condition. We will prove Gaffney inequality under the condition \eqref{eq:intro:lambda times main condition} for Lipschitz domains as long as $\lambda$ is
$C^1.$ Thus, if $\Omega$ is not $C^{2}$ (and thus $\nu$ is not $C^1$), this result does not include the classical boundary condition $\nu\times\omega=0.$ However, we will give in the case of domains in $\re^2$ a better result which does not even require $\lambda$ to be globally Lipschitz on $\delomega,$
see Theorem \ref{theorem:lambda wedge omega polygonial in re2}. A special case of this theorem is for instance Gaffney inequality on polygonial domains with either of the classical boundary conditions on different parts of the polygon. This is a first step in providing more general Gaffney inequalities, with a simple proof, to be applicable in numerical analyis. We refer to Arnold-Falk-Winther \cite{Arnold Falk Winther} (Section 7.7) and Bonizzoni-Buffa-Nobile \cite{Bonizzoni Buffa Nobile} for a discussion on vector-valued finite element methods and applications of Gaffney inequality in that setting.

We do not require in any of our results that $\Omega$ is convex. This is because we assume that our vector fields $\omega$ are at least in $H^1(\Omega)^n.$ A weaker formulation of the classical Gaffney inequality for Lipschitz domains requires $\Omega$ to be convex. By the weak formulation we mean that we assume
$$
  \omega\in H_T(\diw,\curl;\Omega)=\{\omega\in L^2(\Omega)^n|\, \diw\omega\in L^2(\Omega),\quad\curl\omega\in L^2(\Omega)^n,\quad\nu\times\omega=0\quad\text{ on }\delomega\}.
$$
Under this hypothesis on $\omega,$ Gaffney inequality becomes a regularity result and states that $\omega\in H^1(\Omega)^n$ and satisfies the corresponding estimate \eqref{intro:eq:main}. The same result holds true if we replace $H_T$ by $H_N$, the space with vanishing normal component. The usual approach to prove such regularity results is to use Gaffney inequality for an approximating sequence $\{\omega_k\}$ in $H^1.$ The difficulty consists in establishing $\nu\times\omega_k=0$ on $\delomega,$ using the assumption that $\nu\times\omega=0$ on $\delomega$ in a weak sense. This approximation fails for nonconvex domains, which are only Lipschitz and the regularity statement does not hold true. See for instance the Remark following the proof of Theorem 5.1 in Mitrea \cite{Mitrea Gaffney-Friedrichs ineq}. This is essentially the same example as the one for the Laplace equation: it is well known that the solution $u$ of $\Delta u=f,$ $f\in L^2,$ is in general only in $H^{3/2}$ if $\Omega$ is a nonconvex 
polygonial
domain,
 cf. Grisvard \cite{Grisvard Singularities in B}. For more details on these approximation theorems and regularity results we
refer to Amrouche-Bernardi-Dauge-Girault \cite{Amrouche-Bernardi...}, Belgacem-Bernardi-Costabel-Dauge \cite{Costabel Belgacem Bernardi Dauge 1997}, Ciarlet-Hazard-Lohrengel \cite{Ciarlet et alt. 1998}, Costabel \cite{Costabel 1990}, Costabel-Dauge \cite{Costabel Dauge 1998} and Girault-Raviart \cite{Girault-Raviart}. For a different approach in proving the classical Gaffney inequality for nonsmooth domains see Mitrea \cite{Mitrea Gaffney-Friedrichs ineq}, where the inequality is obtained using existence and regularity of an elliptic boundary value system established in Mitrea \cite{Mitrea Mitrea uses for proof of Gaffney-Friedrichs}.

Note that the proof of Theorem \ref{theorem:lambda wedge omega for vector fields} (Gaffney inequality with condition \eqref{eq:intro:lambda times main condition}) would not simplify if we assumed $\Omega$ to be smooth.

In view of the condition \eqref{eq:intro:lambda times main condition} one might expect that the classical condition $\langle\nu;\omega\rangle=0$ can be generalized too, by replacing $\nu$ by a nonvanishing vector field $\lambda.$ This is however not true if $n\geq 3$ as can be seen by a simple counterexample. It is also not true that condition \eqref{eq:intro:lambda times main condition} generalizes to differential forms of  higher order. We give these counterexamples at the end of this paper.

\section{Mixed classical boundary conditions}

If $\Omega$ is a bounded $C^{1,1}$ open set with unit exterior normal $\nu$ on its boundary $\delomega$ and $\omega$ is some vector field, we shall decompose it as
$$
 \omega=\omega_T+\omega_N,\quad\text{where}\quad \omega_N=\langle\omega;\nu\rangle\nu\quad\text{ and }\quad\omega_T=\omega-\omega_N.
$$
Throughout this paper for vectors fields $\omega,\lambda$ in $\re^n,$ the curl and cross product are defined as vectors in $\re^{\binom{n}{2}}$ defined by
$$
 (\curl \omega)_{ij}=\frac{\partial\omega_j}{\partial x_i}-\frac{\partial\omega_i}{\partial x_j}\quad\text{ and }\quad
 (\omega\times\lambda)_{ij}=\omega_i\lambda_j-\omega_j\lambda_i\,,
 \quad 1\leq i<j\leq n.
$$
We now state a theorem whose proof is essentially the same as the one presented in Csat\'o-Dacorogna-Kneuss \cite{Csato-Dacorogna-Kneuss} for the classical Gaffney inequality.

\begin{theorem}
\label{theorem:mixed classical bdry conditions}
Let $n\geq 2$ and $\Omega\subset\re^n$ be a bounded open $C^{1,1}$ set with exterior unit normal $\nu$ on $\delomega.$ Then there exists a constant $C=C(\Omega)$ such that
$$
  \|\nabla\omega\|_{L^2(\Omega)}^2\leq C\left(\|\curl\omega\|_{L^2(\Omega)}^2+ \|\diw\omega\|_{L^2(\Omega)}^2+\|\omega\|_{L^2(\Omega)}^2\right),
$$
for all $\omega\in H^1(\Omega)^n$ satisfying
$$
  \omega_T=0\text{ or }\omega_N=0\quad\text{ on }\Gamma_i,\qquad \delomega= \bigcup_{i=1}^M\overline{\Gamma}_i\,,
$$
and $\Gamma_i$ are open sets in $\delomega$ and $M\in \mathbb{N}.$
\end{theorem}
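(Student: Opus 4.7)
The plan is to mimic the standard proof of the classical Gaffney inequality: establish an integration-by-parts identity with an explicit boundary term and then bound that boundary term by $\|\omega\|_{L^{2}(\delomega)}^{2}$ under either of the two local boundary conditions. I would work first with smooth $\omega$ satisfying the mixed boundary conditions on each $\Gamma_{i}$ and then pass to the $H^{1}$-closure of this subspace by density. Starting from the pointwise identity $|\nabla\omega|^{2}=\sum_{i,j}(\p_{i}\omega_{j})(\p_{j}\omega_{i})+|\curl\omega|^{2}$ and integrating by parts once in the mixed term, I would obtain the Gaffney identity
\[
  \intomega|\nabla\omega|^{2} = \intomega|\diw\omega|^{2}+\intomega|\curl\omega|^{2}+\intdelomega B(\omega),\qquad B(\omega) := \sum_{i,j}\omega_{i}(\p_{i}\omega_{j})\nu_{j}-\langle\omega;\nu\rangle\diw\omega.
\]

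The crux of the proof is the pointwise bound $|B(\omega)|\leq C|\omega|^{2}$ on each $\Gamma_{i}$, with $C=C(\Omega)$. For this I would extend $\nu$ to a $C^{0,1}$ unit vector field $\tilde\nu$ in a neighbourhood of $\delomega$ (possible because $\delomega\in C^{1,1}$) and rewrite
\[
  B(\omega) = (\omega\cdot\nabla)\langle\omega;\tilde\nu\rangle - \omega^{T}(\nabla\tilde\nu)\omega - \langle\omega;\nu\rangle\diw\omega.
\]
On a piece $\Gamma_{i}$ where $\omega_{N}=0$, the field $\omega$ is tangent to $\delomega$, so $\omega\cdot\nabla$ acts as a tangential differential operator at boundary points; since $\langle\omega;\tilde\nu\rangle=0$ on $\Gamma_{i}$, its tangential derivative vanishes there, killing the first summand; the third vanishes by hypothesis, leaving only $-\omega^{T}(\nabla\tilde\nu)\omega$, controlled by $C|\omega|^{2}$ because $\tilde\nu$ is Lipschitz. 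On a piece where $\omega_{T}=0$, write $\omega=f\tilde\nu$ on $\Gamma_{i}$ with $f=\langle\omega;\nu\rangle$: the identity $\nu^{T}(\nabla\tilde\nu)\nu=\tfrac{1}{2}\p_{\nu}|\tilde\nu|^{2}=0$ annihilates the middle summand, and a tangential-divergence computation (using that $\sum_{k}\nu_{k}\p_{k}^{T}=0$, where $\p_{k}^{T}:=\p_{k}-\nu_{k}\p_{\nu}$) reduces the remaining combination to $-f^{2}\diw_{T}\tilde\nu$, again bounded by $C|\omega|^{2}$.

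Summing the pointwise bound over the $\Gamma_{i}$ and invoking the standard trace estimate $\|\omega\|_{L^{2}(\delomega)}^{2}\leq\varepsilon\|\nabla\omega\|_{L^{2}(\Omega)}^{2}+C_{\varepsilon}\|\omega\|_{L^{2}(\Omega)}^{2}$, I would choose $\varepsilon$ small enough to absorb the $\varepsilon$-contribution back into the left-hand side of the Gaffney identity, concluding. The only delicate step is the cancellation argument in the $\omega_{T}=0$ case; everything else is routine, and the $C^{1,1}$ regularity of $\delomega$ is used precisely so that $\tilde\nu$ and $\diw_{T}\tilde\nu$ are essentially bounded.
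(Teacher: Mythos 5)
Your argument is essentially the paper's own (sketched) proof: the same integration-by-parts identity, the same pointwise bound $|B(\omega)|\le C|\omega|^{2}$ on each $\Gamma_i$ (which is purely local, hence indifferent to which of the two classical conditions holds on which piece), and the same trace-inequality absorption; the paper simply cites these steps from Csat\'o-Dacorogna-Kneuss (Theorems 5.7 and 5.16) and Grisvard rather than carrying them out. The one point you pass over quickly --- as does the paper --- is the density, in the $H^{1}$ class of fields satisfying the mixed conditions, of smooth fields satisfying the same conditions, which deserves a word of justification near the interfaces $\overline{\Gamma}_i\cap\overline{\Gamma}_j$.
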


\begin{remark}
\label{remark:von Wahl and topology}
If $M=1$ (classical boundary conditions,  $\Gamma_1=\delomega$) and $\Omega$ is contractible, then one easily obtains the better estimate
$$
  \|\nabla\omega\|_{L^2(\Omega)}^2\leq C\left(\|\curl\omega\|_{L^2(\Omega)}^2+ \|\diw\omega\|_{L^2(\Omega)}^2\right),
$$
see Csat\'o-Dacorogna-Kneuss \cite{Csato-Dacorogna-Kneuss} Theorem 6.5 and Theorem 6.7 (Step 1 of the proof). A precise treatment of the optimal topological assumptions on the domain for such an estimate to hold true is carried out in von Wahl \cite{Wahl}.
\end{remark}

\begin{proof} We will not give a detailed proof.
The result follows from \cite{Csato-Dacorogna-Kneuss} Theorem 5.7 (see also \cite{Grisvard Elliptic Pr in nonsmooth} Theorem 3.1.1.1) in the same way as the classical Gaffney inequality: Indeed as in the proof of Theorem 5.16 in \cite{Csato-Dacorogna-Kneuss}  one obtains that
$$
  \intomega(\left(|\curl\omega|^2+|\diw\omega|^2\right)\geq\intomega|\nabla\omega|^2-C\intdelomega|\omega|^2,
$$
and one concludes similarly. The above mentioned references treat $C^2$ domains but remain valid without any change for $C^{1,1}$ domains.
See also Step 3 in the first proof of Proposition \ref{proposition:lambda wedge omega for vector fields}.
\end{proof}

\section{The $\lambda\times\omega=0$ condition}

We now state our first main result. We will distinguish the case $n=2$ as we will give in Section \ref{section n is 2} in the two dimensional case an improvement of the theorem by weakening the regularity assumptions . To state the theorem we need the following definition (which we will use actually only for $C^{r,\alpha}=C^{1,0}$ or $C^{0,1}$).

\begin{definition}
\label{definition:C r alpha on Lipschitz}
Let $r\geq 0$ be an integer and $0\leq\alpha\leq 1.$ If $\Omega$ is a Lipschitz set (meaning that $\delomega$ is Lipschitz), we say that a function $\lambda:\delomega\to\re$ is in $C^{r,\alpha}(\delomega)$ if there exists an extension of $\lambda$ to $\re^n$ such that $\lambda\in C^{r,\alpha}(\re^n).$ We make the convention that $C^{r,0}=C^r.$

\end{definition}

\begin{theorem}\label{theorem:lambda wedge omega for vector fields}
Let $n\geq 2,$ $\Omega\subset\re^n$ be a bounded open Lipschitz set and $\lambda\in C^{1}(\delomega)^n$ be such that
$$
  \lambda\neq 0\quad\text{ on }\delomega.
$$
Then there exists a constant $C=C(\Omega,\lambda)$ such that
$$
  \|\nabla\omega\|_{L^2(\Omega)}^2\leq C\left(\|\curl\omega\|_{L^2(\Omega)}^2+ \|\diw\omega\|_{L^2(\Omega)}^2+\|\omega\|_{L^2(\Omega)}^2\right),
$$
for all $\omega\in H^1(\Omega)^n$ which satisfy
$$
  \lambda\times\omega=0\quad\text{ on }\delomega.
$$
If $n=2$ then the same conclusion holds under the weaker regularity assumptions $\lambda\in C^{0,1}(\delomega)^2.$
\end{theorem}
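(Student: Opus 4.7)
The plan is to reduce the inequality to a boundary-only bilinear bound on $\omega$, then absorb via the trace inequality. First, using Definition~\ref{definition:C r alpha on Lipschitz}, extend $\lambda$ to $\bar\lambda \in C^1(\overline\Omega)^n$; compactness of $\delomega$ together with $\lambda\neq 0$ there ensures $|\bar\lambda|\geq c_0>0$ on some open neighborhood $U$ of $\delomega$. A smooth cutoff $\chi$ with $\chi\equiv 1$ near $\delomega$ and $\supp\chi\subset U$ decomposes $\omega=\chi\omega+(1-\chi)\omega$; the second summand lies in $H^1_0(\Omega)^n$, for which the identity $\|\nabla\eta\|_{L^2}^2=\|\curl\eta\|_{L^2}^2+\|\diw\eta\|_{L^2}^2$ (pure integration by parts on $C^\infty_c$, extended by density) combined with product-rule estimates yields the estimate. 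Hence it suffices to treat $\omega$ supported in $U$, where $\bar\lambda$ is a nonvanishing $C^1$ vector field.

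Next, I would invoke the integration-by-parts identity, valid for $\omega\in C^\infty(\overline\Omega)^n$ and extended to $H^1(\Omega)^n$ after approximating by smooth fields satisfying the same boundary condition,
\[
\int_\Omega |\nabla\omega|^2 = \int_\Omega\bigl(|\curl\omega|^2+|\diw\omega|^2\bigr) + \int_{\delomega}\bigl[\nu\cdot(\omega\cdot\nabla)\omega - \langle\omega,\nu\rangle\diw\omega\bigr].
\]
If the boundary integral is bounded by $C\|\omega\|_{L^2(\delomega)}^2$ with a constant depending only on $\lambda$, $\nabla\lambda$ and the Lipschitz geometry of $\Omega$, the trace inequality $\|\omega\|_{L^2(\delomega)}^2\leq\varepsilon\|\nabla\omega\|_{L^2(\Omega)}^2+C_\varepsilon\|\omega\|_{L^2(\Omega)}^2$ followed by absorption for small $\varepsilon$ gives the conclusion.

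The principal obstacle is the boundary bound, whose difficulty is twofold: the integrand a priori contains normal derivatives of $\omega$, and $\nu$ is only $L^\infty$ on a Lipschitz boundary so any computation invoking $\nabla\nu$ is forbidden. The idea is to use the boundary condition to write $\omega = t\lambda$ with $t=\langle\omega,\lambda\rangle/|\lambda|^2\in H^{1/2}(\delomega)$ and substitute this directly into the boundary integrand; expanding $(\omega\cdot\nabla)\omega$ and $\diw\omega$ along $\lambda$, the uncontrolled normal-derivative contributions from the two terms cancel, leaving only tangential derivatives of the scalar $t$. These are in turn integrated by parts on the closed Lipschitz manifold $\delomega$ (where $\partial(\delomega)=\emptyset$) so that every derivative falls onto the $C^1$ factor $\bar\lambda$. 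The outcome is a quadratic form in $\omega|_{\delomega}$ whose coefficients depend only on $\lambda, \nabla\lambda$ and $\nu$ (never on $\nabla\nu$); this is precisely why $C^1$ regularity of $\lambda$ is imposed for $n\geq 3$ while $\Omega$ need only be Lipschitz, in line with the remark that smoothness of $\Omega$ would not simplify the proof. For $n=2$ the tangent bundle along $\delomega$ is one-dimensional and the algebra collapses so that only $C^{0,1}$ regularity of $\lambda$ is required, matching the last sentence of the theorem.
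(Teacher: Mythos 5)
Your outline of the a priori estimate is essentially the paper's first proof: the boundary term produced by integrating $|\nabla\omega|^2-|\curl\omega|^2-|\diw\omega|^2$ by parts is purely tangential, and writing $\omega=t\lambda$ on $\delomega$ kills the dangerous derivatives. One simplification you missed: if you organize the boundary integrand as $\sum_{i<j}\bigl(-\omega_i\p_{ij}[\omega_j]+\omega_j\p_{ij}[\omega_i]\bigr)$ with $\p_{ij}[f]=\nu_j\p_i f-\nu_i\p_j f$, then substituting $\omega_i=t\lambda_i$ and using the product rule makes the $\p_{ij}[t]$ terms cancel \emph{pointwise} by antisymmetry, leaving $-t^2(\lambda_i\p_{ij}[\lambda_j]-\lambda_j\p_{ij}[\lambda_i])$; no integration by parts on the closed Lipschitz manifold $\delomega$ is needed, and only $\lambda\in C^{0,1}$ is used at this stage, in every dimension. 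This already shows that your explanation of the $C^1$ versus $C^{0,1}$ hypothesis (``the tangent bundle is one-dimensional in 2D so the algebra collapses'') cannot be the right one: the boundary estimate itself costs only Lipschitz regularity of $\lambda$ for all $n$.

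The genuine gap is the clause you dispatch in a subordinate phrase: ``extended to $H^1(\Omega)^n$ after approximating by smooth fields satisfying the same boundary condition.'' This is where the whole difficulty of the theorem lives, and it is exactly where the regularity hypotheses on $\lambda$ and the $n=2$ dichotomy come from. The identity $\int_\Omega|\nabla\omega|^2=\int_\Omega(|\curl\omega|^2+|\diw\omega|^2)+\int_{\delomega}(\cdots)$ does not pass to $\omega\in H^1$ by a routine density argument, because the boundary integrand is quadratic in first derivatives of the trace, which for $H^1$ fields lies only in $H^{-1/2}(\delomega)$; one must instead approximate $\omega$ by $C^1$ fields (Lipschitz suffices when $n=2$) that \emph{still satisfy} $\lambda\times\omega^k=0$ on $\delomega$, and such constrained approximation is nontrivial. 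The paper's construction (Proposition~\ref{prop:approx of lambda times omega zero by smooth}) works locally where some $\lambda_1\neq 0$: the functions $\alpha_i=\lambda_1\omega_i-\lambda_i\omega_1$ lie in $H^1_0$ of the patch, are approximated by $C^\infty_c$ functions, and $\omega^k$ is reconstructed as $\omega^k_i=(\alpha^k_i+\lambda_i\beta^k)/\lambda_1$ --- so the approximants inherit only the regularity of $\lambda$, which is why $\lambda\in C^1$ is required to produce $C^1$ approximants for $n\geq 3$, while for $n=2$ Lipschitz approximants suffice because the integration-by-parts identity can be established for Lipschitz fields by parametrizing the boundary curve (the paper's Lemma~\ref{lemma:proof of identity for pair of lipscitz vectors}; note that plain mollification does not give convergence of $\nabla\omega$ on $\delomega$, so even this step needs an argument). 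Without supplying this constrained density result and the Lipschitz-field version of the boundary identity in 2D, your proof establishes the inequality only for $\omega\in C^1(\Omegabar)^n$, not for $\omega\in H^1(\Omega)^n$ as claimed.
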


\begin{remark} \label{remark:main.theorem.lambda}
(i) Note that if $\Omega$ is a $C^{2}$ set, then the unit outward normal vector $\nu$ is $C^{1}$ and the Theorem implies the classical boundary condition $\nu\times\omega=0.$
\smallskip

(ii) If $n=2,$ then this theorem interpolates between the two classical boundary conditions $\omega_T=0,$ respectively $\omega_N=0$. To see this take
$\lambda=\nu=(\nu_1,\nu_2),$ respectively $ \lambda=(\nu_2,-\nu_1).$
\smallskip

(iii)
Recall (see Remark \ref{remark:von Wahl and topology}) that if $\Omega$ is contractible and $\lambda=\nu,$ then in the above theorem the inequality can be replaced by
$$
  \|\nabla\omega\|_{L^2(\Omega)}^2\leq C\left(\|\curl\omega\|_{L^2(\Omega)}^2+ \|\diw\omega\|_{L^2(\Omega)}^2\right),
$$
This is not true for general $\lambda.$ To see this just notice that one can take $\lambda\in C^{\infty}(\Omegabar)^n$ equal to a  harmonic field (i.e. $\curl \lambda=0$ and $\diw \lambda=0$) that never vanishes on the boundary. Then $\omega=\lambda$ trivially satisfies $\lambda\times\omega=0$ on $\delomega.$ Such non-constant harmonic fields exist, for example take $\omega=(x_2,x_1)$ and a domain $\Omega\subset\re^2$ such that $0\notin\delomega,$ so that $\lambda=\omega\neq 0$ on the boundary.

\smallskip
(iv) If $\lambda$ is constant then $C=1,$ see Lemma \ref{lemma:C is one if lambda constant} or proof of Proposition \ref{proposition:lambda wedge omega for vector fields}. $C=1$ also if $\lambda=\nu$ is the normal to $\delomega$ and $\Omega$ is convex (actually $n-1$ convex is sufficient), see \cite{Csato-Dacorogna-Sil}, but this requires a different proof.
\end{remark}

\begin{proof}[Proof of Theorem \ref{theorem:lambda wedge omega for vector fields}] We first prove the result for $C^1$ vector fields $\omega$, respectively Lipschitz vector fields if $n=2$  (cf. Proposition \ref{proposition:lambda wedge omega for vector fields}).
Theorem \ref{theorem:lambda wedge omega for vector fields} will then follow by approximation (cf. Proposition \ref{prop:approx of lambda times omega zero by smooth}).
\end{proof}

\begin{proposition}\label{proposition:lambda wedge omega for vector fields}
Let $n\geq 2,$ $\Omega\subset\re^n$ be a bounded open Lipschitz set and $\lambda\in C^{0,1}(\delomega)^n$ be such that $\lambda\neq 0$ on $\delomega.$
Then there exists a constant $C=C(\Omega,\lambda)$ such that
$$
  \|\nabla\omega\|_{L^2(\Omega)}^2\leq C\left(\|\curl\omega\|_{L^2(\Omega)}^2+ \|\diw\omega\|_{L^2(\Omega)}^2+\|\omega\|_{L^2(\Omega)}^2\right),
$$
for all $\omega\in C^{1}(\Omegabar)^n$ which satisfy $\lambda\times \omega=0$ on $\delomega.$ If $n=2,$ the the same holds true if $\omega\in C^{0,1}(\Omegabar)^2$.
\end{proposition}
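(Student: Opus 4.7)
The plan is to use the classical energy identity together with a careful analysis of the boundary term that exploits the algebraic structure of both the identity itself and of the collinearity hypothesis $\la\times\omega=0$.

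First I would record the pointwise identity
\[
|\curl\omega|^2 + (\diw\omega)^2 = |\nabla\omega|^2 + \diw V,\qquad V_k := \omega_k\diw\omega - \omega_l\pa_l\omega_k,
\]
which is a direct computation. Integrating over $\Om$ and applying the divergence theorem (justified, in the Lipschitz setting, by smooth approximation of $\omega$) gives
\[
\intomega|\nabla\omega|^2\,dx = \intomega\bigl(|\curl\omega|^2+(\diw\omega)^2\bigr)dx - \intdelomega B\,d\sigma,
\]
where $B=\langle V,\nu\rangle = -\sum_{i,j}A_{ij}\pa_j\omega_i$ with $A_{ij}:=\omega_j\nu_i-\delta_{ij}(\omega\cdot\nu)$. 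The plan is then to establish the pointwise bound $|B|\le C(\Om,\la)|\omega|^2$ on $\delomega$, after which the standard trace inequality closes the argument.

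The heart of the proof is a pair of algebraic cancellations in the coefficients $A_{ij}$. The first holds regardless of any boundary hypothesis: $\sum_j A_{ij}\nu_j = \nu_i(\omega\cdot\nu)-\nu_i(\omega\cdot\nu)=0$, so the normal-derivative contribution to $B$ vanishes and $B$ depends only on the tangential derivatives $D^T_j\omega_i := \pa_j\omega_i-\nu_j\pa_\nu\omega_i$ on $\delomega$. In particular $B$ is determined by $\omega|_{\delomega}$ alone. Now the hypothesis $\la\times\omega=0$ together with $\la\neq 0$ forces $\omega = u\la$ on $\delomega$ with $u=\langle\omega,\la\rangle/|\la|^2$, whence $D^T_j\omega_i = \la_i D^T_j u + u D^T_j\la_i$. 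The second (crucial) cancellation is that the $D^T u$ contribution vanishes:
\[
\sum_{i,j}A_{ij}\la_i D^T_j u = u(\la\cdot\nu)(\la\cdot D^T u) - u(\la\cdot\nu)(\la\cdot D^T u) = 0,
\]
which succeeds by exactly the same algebraic pattern as the first, with $\la$ in the role of $\nu$. What remains is
\[
B = -u^2\Bigl[\sum_{i,j}\la_j\nu_i D^T_j\la_i - (\la\cdot\nu)\sum_i D^T_i\la_i\Bigr],
\]
and since $\la\in C^{0,1}(\delomega)^n$ and $|\la|\ge c>0$ on the compact set $\delomega$, the bracket is $L^\infty$, $|u|\le|\omega|/c$, and therefore $|B|\le C(\Om,\la)|\omega|^2$ a.e.\ on $\delomega$.

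To finish, I would invoke the standard trace inequality $\intdelomega|\omega|^2 d\sigma \le \varepsilon\|\nabla\omega\|_{L^2(\Om)}^2 + C_\varepsilon\|\omega\|_{L^2(\Om)}^2$, insert it into the identity and choose $\varepsilon$ small enough to absorb the $\nabla\omega$-term into the left-hand side. The $n=2$, $\omega\in C^{0,1}(\Omegabar)^2$ case requires no new ingredient: all computations are performed a.e.\ (using Rademacher), and mollification of $\omega$ justifies the divergence theorem. The hard part, and the actual content of the proof, is recognising the two cancellations above: without the second one a $D^T u$ term would persist in $B$ and would require control of tangential derivatives of $\omega$ on $\delomega$, which the trace inequality cannot provide; without the first one could not have reduced to tangential derivatives in the first place.
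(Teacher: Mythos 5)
Your argument is, in substance, the paper's own first proof of this proposition. The identity $|\curl\omega|^2+|\diw\omega|^2-|\nabla\omega|^2=\diw V$ is the paper's pointwise identity written in divergence form; your first cancellation $\sum_jA_{ij}\nu_j=0$ is the paper's observation that the boundary integrand $-\omega_i\p_{ij}[\omega_j]+\omega_j\p_{ij}[\omega_i]$ involves only the tangential derivatives $\p_{ij}[f]=\nu_j\p_if-\nu_i\p_jf$ (hence only the trace of $\omega$); and your second cancellation is the paper's computation that, writing $\omega_i=\mu_i\omega_1$ with $\mu_i=\lambda_i/\lambda_1$ on a piece of $\delomega$ where $\lambda_1$ is bounded away from zero, the terms containing $\p_{ij}[\omega_1]$ cancel and only $-\omega_1^2\left(\mu_i\p_{ij}[\mu_j]-\mu_j\p_{ij}[\mu_i]\right)$ survives. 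Your global formulation $\omega=u\lambda$ with $u=\langle\omega,\lambda\rangle/|\lambda|^2$ is marginally cleaner in that it dispenses with the paper's covering of $\delomega$ by sets on which a fixed component of $\lambda$ is bounded away from zero, but it is the same cancellation, and the conclusion via the trace inequality and absorption is identical.

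There is, however, one genuine gap: the $n=2$, $\omega\in C^{0,1}(\Omegabar)^2$ case. You assert that ``mollification of $\omega$ justifies the divergence theorem,'' but this is precisely the step the paper flags as failing for Lipschitz $\omega$: the mollified gradients $\eta_k\ast\p\omega_i/\p x_j$ converge a.e.\ in $\Omega$ and in $L^p(\Omega)$, but $\delomega$ is a Lebesgue-null set, so there is no reason for them to converge $\mathcal{H}^{1}$-a.e.\ on $\delomega$ to the tangential derivative of the trace of $\omega_i$, and the limit of the boundary integral is not identified. (For $\omega\in C^1(\Omegabar)$ this problem does not arise, since the derivatives are uniformly continuous and the convolutions converge uniformly up to the boundary; that case of your proof is complete.) The paper closes the Lipschitz gap with a separate argument (Lemma \ref{lemma:proof of identity for pair of lipscitz vectors}): one first proves the integration-by-parts identity with one factor $C^2$ and one factor Lipschitz by integrating $\frac{d}{dt}\left[\omega_1(\varphi(t))\,\omega_2(\varphi(t))\right]$ along an arc-length parametrization $\varphi$ of each connected component of $\delomega$, and then passes to two Lipschitz factors by an $H^1$-density argument. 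Your proof needs this (or an equivalent) ingredient to cover the $C^{0,1}$ addendum.
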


\begin{remark}
Note that in this proposition we require that $\lambda$ is only Lipschitz. The loss of regularity compared to the main Theorem \ref{theorem:lambda wedge omega for vector fields} arises in the approximation, see Proposition \ref{prop:approx of lambda times omega zero by smooth}.
\end{remark}

We give two proofs of this proposition. The first one is simpler, following the ideas of Csat\'o-Dacorogna \cite{Csato-Dacorogna 2010}. However, we do not use the identity established in \cite{Csato-Dacorogna 2010} and which is used in establishing the classical Gaffney inequality, respectively Theorem \ref{theorem:mixed classical bdry conditions}. The second proof that we give is a generalization of Morrey's original proof of Gaffney inequality (see Morrey \cite{Morrey 1956}, Morrey-Eells \cite{MorreyEells} or Iwaniec-Scott-Stroffolini \cite{Iwaniec Scott} for an $L^p$ version) for the boundary condition $\nu\times\omega=0.$ It is longer, but several of the intermediate steps  are of interest on their own right, cf. Lemma \ref{lemma:C is one if lambda constant}, and also Lemmas \ref{lemma:d delta nabla invariant in squar if SOn} and \ref{lemma:key lemma local boundary for lambda wedge 1form} which are independent of the boundary conditions. In the first proof we will use the following abbreviation, $f$
being a function defined on a neighborhood of $\delomega$:
$$
  \p_{ij}[f]:=\nu_j\frac{\p f}{\p x_i}-\nu_i\frac{\p f}{\p x_j},
$$
where $\nu=(\nu_1,\ldots,\nu_n)$ is the outward unit normal vector on $\delomega.$ It can be easily seen that $\p_{ij}[f]$ is a tangential derivative and depends only on the values of $f$ on $\delomega.$ 
Therefore, if $f$ is Lipschitz then $\partial_{ij}[f]$ is well defined $\mathcal{H}^{n-1}$ almost everywhere on any Lipschitz boundary $\delomega$, see for instance Lemma \ref{lemma:proof of identity for pair of lipscitz vectors} equations (10)-(11) for the case $n=2$ (if $n\geq 3,$ the argument is similar by  composing $f$ with a local parametrization of $\delomega$). Moreover by the product rule of derivation:
\begin{equation}
 \label{eq:product rule for p ij tangential der}
  \p_{ij}[fg]=\p_{ij}[f] g+f\p_{ij}[g].
\end{equation}
Throughout the proof we will frequently use that any Lipschitz function defined on a subset of $\re^n$ can be extended to a Lipschitz function on the whole space, and conversely, that  restrictions of Lipschitz functions to any subset are still Lipschitz.

\smallskip

\begin{proof}[First Proof of Proposition \ref{proposition:lambda wedge omega for vector fields}.]
\textit{Step 1.}
Let us assume first that $\omega\in C^2(\Omegabar)^n.$ A direct calculation gives the identity
$$
  |\curl\omega|^2+|\diw\omega|^2-|\nabla\omega|^2=2\sum_{i<j}\left(\frac{\partial\omega_i}{\partial x_i}\frac{\partial\omega_j}{\partial x_j}-\frac{\partial\omega_i}{\partial x_j}\frac{\partial\omega_j}{\partial x_i}\right).
$$
So we obtain by partial integration that
\begin{equation}
 \label{eq:proof:first proof lambda wedge partial int}
  \intomega\left(|\curl \omega|^2+|\diw \omega|^2-|\nabla\omega|^2\right)=
  -\sum_{i<j}
  \intdelomega \omega_i\p_{ij}[\omega_j]+\sum_{i<j}\intdelomega \omega_j \p_{ij}[\omega_i].
\end{equation}
Note that \eqref{eq:proof:first proof lambda wedge partial int} involves only first derivatives of $\omega.$
Therefore by approximation one directly deduces 
that \eqref{eq:proof:first proof lambda wedge partial int} remains true for any $\omega\in C^{1}(\Omegabar)^n.$ To see this note that standard convolution in the whole space works, since the derivatives of $\omega$ are uniformly continuous, and the derivatives of the approximating sequence will converge also uniformly on $\delomega.$ If $n=2$ we apply Lemma \ref{lemma:proof of identity for pair of lipscitz vectors} to obtain that \eqref{eq:proof:first proof lambda wedge partial int} remains true if $\omega\in C^{0,1}(\Omegabar)^2$.
\smallskip

\textit{Step 2.} Since $\lambda=(\lambda_1,\ldots,\lambda_n)\neq 0$ on $\delomega$,
there exist open sets $W_1,\cdots,W_M$, integers $1\leq k(1),\cdots, k(M)\leq n$ and $\epsilon>0$ such that
$$
  \delomega\subset \bigcup_{l=1}^MW_l\quad \text{and}\quad |\lambda_{k(l)}|\geq \epsilon\text{ in }W_l\text{ for $1\leq l\leq M.$}
$$
 We now define inductively
$$
  S_1=W_1\cap \delomega,\quad {S}_2=(W_2\cap \delomega)\backslash {S}_1\,,\ldots,\quad {S}_j=(W_j\cap\delomega)\backslash\left(\bigcup_{m=1}^{j-1}{S}_m\right),
$$
for $j=1,\ldots,M.$ Thus the ${S}_j$ form a disjoint union of $\delomega$ and we can write
\begin{equation}
 \label{eq:proof:p ij omega i omega j split to Gamma l}
  \intdelomega\left(-\omega_i\p_{ij}[\omega_j]+\omega_j\p_{ij}[\omega_i]\right)
  =\sum_{l=1}^M\int_{{S}_l}\left(-\omega_i\p_{ij}[\omega_j]+\omega_j\p_{ij}[\omega_i]\right),
\end{equation}
for any $i<j.$ We now claim that for each $l=1,\ldots,M$ and each $i<j,$ there exists a constant  $C=C(\Omega,\lambda)>0$ such that
\begin{equation}
 \label{eq:proof:p ij omega j- p ij omega i and so on}
  \left|\int_{{S}_l}\left(-\omega_i\p_{ij}[\omega_j]+\omega_j\p_{ij}[\omega_i]\right) \right|\leq C\int_{{S}_l}|\omega|^2
\end{equation}
for any $\omega$ satisfying $\lambda\times\omega=0$ on $\delomega.$ Indeed fix $l$ and assume without loss of generality that $k(l)=1.$ Then we obtain from the boundary condition on $\omega$ that
$\lambda_1\omega_i-\lambda_i\omega_1=0$ for $i=1,\ldots,n$
on $\delomega.$
Thus we first obtain that for $i=1,\ldots,n$
$$
  \omega_i=\mu_i\omega_1\quad\text{ and where }\quad \mu_i =\frac{\lambda_i}{\lambda_1}\in C^{0,1}(\overline{{S}_i}).
$$
This gives, using \eqref{eq:product rule for p ij tangential der}, that on ${S}_l$ we have
$$
  \left(-\omega_i\p_{ij}[\omega_j]+\omega_j\p_{ij}[\omega_i]\right)=-\omega_1^2\left(\mu_i\p_{ij}[\mu_j]-\mu_j\p_{ij}[\mu_i]\right).
$$
From this indentity we obtain \eqref{eq:proof:p ij omega j- p ij omega i and so on}.
\smallskip

\textit{Step 3.} From \eqref{eq:proof:first proof lambda wedge partial int}, \eqref{eq:proof:p ij omega i omega j split to Gamma l} and \eqref{eq:proof:p ij omega j- p ij omega i and so on} it follows that
$$
  \intomega\left(|\curl\omega|^2+|\diw\omega|^2-|\nabla\omega|^2\right)\geq -C_1\intdelomega|\omega|^2
$$
for some constant $C_1=C_1(\Omega,\lambda)>0.$
We now recall that there exists a constant $C_2=C_2(\Omega)$ such that (see for instance \cite{Grisvard Elliptic Pr in nonsmooth} Theorem 1.5.1.10 or \cite{Csato-Dacorogna-Kneuss} Proposition 5.15) for any $0<\epsilon<1$
$$
  \intdelomega|\omega|^2\leq \epsilon\intomega|\nabla\omega|^2+\frac{C_2}{\epsilon}\intomega|\omega|^2.
$$
Choose $\epsilon$ such that $\epsilon C_1\leq 1/2$ and then the theorem follows.
\end{proof}
\smallskip

We have used in the proof of Proposition \ref{proposition:lambda wedge omega for vector fields}, in the case $n=2$, the following lemma. In this case one cannot prove \eqref{eq:proof:first proof lambda wedge partial int} for Lipschitz vectors by approximation, since standard convolution by some smoothing kernels $\{\eta_k\}_{k\in\mathbb{N}}$ in the whole space does not imply any kind of convergence of $\left\{\eta_k\ast \partial\omega_i/\partial x_j\right\}_{k\in\mathbb{N}}$ on $\delomega$ to the required function.

\begin{lemma}
\label{lemma:proof of identity for pair of lipscitz vectors}
Let $\Omega\subset\re^2$ be a bounded open Lipschitz set with unit outward normal $\nu$ and assume that $\omega_1\,,\omega_2\in W^{1,\infty}(\Omega).$ Then the following identity holds
\begin{equation}
\label{eq:lemma:pair of lipschitz vectors}
  \intdelomega \omega_1\left(\frac{\p \omega_2}{\p x_2}\nu_1-\frac{\p \omega_2}{\p  
  x_1}\nu_2\right)
  =\intomega
  \left(\frac{\p \omega_1}{\p x_1}\frac{\p \omega_2}{\p x_2}-\frac{\p \omega_2}{\p  
  x_1}\frac{\p \omega_1}{\p x_2}\right).
\end{equation}
\end{lemma}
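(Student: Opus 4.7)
The algebraic core of \eqref{eq:lemma:pair of lipschitz vectors} is the product-rule cancellation
$$
\frac{\p\omega_1}{\p x_1}\frac{\p\omega_2}{\p x_2}-\frac{\p\omega_2}{\p x_1}\frac{\p\omega_1}{\p x_2}
=\frac{\p}{\p x_1}\!\Big(\omega_1\frac{\p\omega_2}{\p x_2}\Big)-\frac{\p}{\p x_2}\!\Big(\omega_1\frac{\p\omega_2}{\p x_1}\Big),
$$
in which the second-derivative terms $\omega_1\,\p_1\p_2\omega_2$ cancel. Green's theorem applied to $F=(\omega_1\p_2\omega_2,\,-\omega_1\p_1\omega_2)$ then gives \eqref{eq:lemma:pair of lipschitz vectors} immediately for $\omega_1,\omega_2\in C^2(\overline{\Omega})$. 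The problem is therefore entirely one of approximation: to pass from $C^2$ to Lipschitz without being able to control $\nabla^2\omega_2$.

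I would extend $\omega_1,\omega_2$ to $W^{1,\infty}(\re^2)$ (possible since $\Omega$ is Lipschitz) and mollify to obtain $\omega_i^\varepsilon\in C^\infty$ with $\omega_i^\varepsilon\to\omega_i$ uniformly on $\overline{\Omega}$, a uniform $W^{1,\infty}$ bound, and $\nabla\omega_i^\varepsilon\to\nabla\omega_i$ in every $L^p(\Omega)$ with $p<\infty$. Applying \eqref{eq:lemma:pair of lipschitz vectors} to the $\omega_i^\varepsilon$, the interior integral passes to the limit by $L^2$ convergence combined with the uniform $L^\infty$ bound. The delicate step is the boundary integral, which is precisely the obstruction flagged in the paragraph preceding the lemma: $\eta_k\ast\p_j\omega_2$ has no useful limit on $\delomega$.

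To circumvent this I would use a finite partition of unity on $\delomega$ to reduce to a local piece where $\delomega\cap U=\{(t,\phi(t)):t\in I\}$ with $\phi$ Lipschitz. Setting $g^\varepsilon(t):=\omega_2^\varepsilon(t,\phi(t))$, and using $\nu=\pm(-\phi',1)/\sqrt{1+|\phi'|^2}$ together with $d\mathcal{H}^1=\sqrt{1+|\phi'|^2}\,dt$, a direct calculation shows
$$
\int \omega_1^\varepsilon\!\left(\frac{\p\omega_2^\varepsilon}{\p x_2}\nu_1-\frac{\p\omega_2^\varepsilon}{\p x_1}\nu_2\right)d\mathcal{H}^1
=\pm\int_I\omega_1^\varepsilon(t,\phi(t))\,(g^\varepsilon)'(t)\,dt,
$$
so the scalar combination that appears on the left is in fact a pure tangential derivative and never requires separate convergence of the individual $\p_j\omega_2^\varepsilon$ on the boundary.

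Finally, $g^\varepsilon$ is uniformly Lipschitz on $I$ and converges uniformly to $g(t):=\omega_2(t,\phi(t))$; hence $(g^\varepsilon)'$ is bounded in $L^\infty(I)$ and, by Banach--Alaoglu together with distributional identification of the limit, $(g^\varepsilon)'\rightharpoonup^{\ast} g'$. Combined with uniform convergence of $\omega_1^\varepsilon(\cdot,\phi(\cdot))$, this passes the product to the limit, giving $\pm\int_I\omega_1(t,\phi(t))\,g'(t)\,dt$, which is the local contribution of the LHS of \eqref{eq:lemma:pair of lipschitz vectors} (interpreted via the a.e.\ tangential derivative of the Lipschitz function $\omega_2$). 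Summing over the partition yields the lemma. The main obstacle is exactly the one already highlighted by the authors: the boundary trace of $\nabla\omega_2^\varepsilon$ has no good limit; one must recognise that the scalar combination appearing in the statement is intrinsically one-dimensional along $\delomega$ and is controlled uniformly by the Lipschitz norm alone.
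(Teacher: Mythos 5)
Your argument is correct, and it rests on the same key observation as the paper's proof — that the boundary integrand $\omega_1\left(\p_2\omega_2\,\nu_1-\p_1\omega_2\,\nu_2\right)$ is $\omega_1$ times the arclength derivative of the trace of $\omega_2$, hence controlled by the Lipschitz constant alone — but it exploits that observation by a genuinely different approximation scheme. The paper never mollifies $\omega_2$: in its Step 1 it takes $\omega_1\in C^2(\Omegabar)$, writes the boundary term as $\int_0^L\omega_1(\varphi)\tfrac{d}{dt}\left[\omega_2\circ\varphi\right]dt$ over a periodic Lipschitz parametrization, and integrates by parts \emph{along the boundary} to move the derivative onto the smooth factor; the resulting term $-\intdelomega\omega_2\left(\p_2\omega_1\nu_1-\p_1\omega_1\nu_2\right)$ is then converted to the interior integral by the divergence theorem, so that second derivatives fall only on the $C^2$ function. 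Its Step 2 then approximates only $\omega_1$ in $H^1$, and the boundary term passes to the limit by the trace theorem against the fixed $L^\infty(\delomega)$ density supplied by $\omega_2$. You instead mollify both functions, keep the derivative on $\omega_2^\varepsilon$, and pass to the limit by weak-$\ast$ compactness of $(g^\varepsilon)'$ in $L^\infty(I)$ paired with uniform convergence of $\omega_1^\varepsilon$; this trades the tangential integration by parts for a localization to graph charts plus a weak-$\ast$ identification of the limit (which is sound: the distributional identification forces the whole family, not just a subsequence, to converge to $g'$). Both routes work; the paper's is somewhat more economical (no partition of unity, no compactness argument, and the intermediate mixed-regularity case is isolated cleanly), while yours is symmetric in $\omega_1,\omega_2$ and avoids the swap trick that relies on the antisymmetry of the boundary term. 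One point you should make explicit rather than leave implicit: since $\nabla\omega_2$ has no trace on $\delomega$, the left-hand side of \eqref{eq:lemma:pair of lipschitz vectors} must be \emph{defined} as $\pm\int\omega_1\,g'$, i.e.\ via the a.e.\ tangential derivative of the Lipschitz trace of $\omega_2$; you flag this at the end, and it coincides with the convention the paper adopts through $\p_{ij}[\cdot]$ and \eqref{eq:omega circ phi deriv}, so it is a shared interpretation rather than a gap.
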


 \begin{proof}
\textit{Step 1.} Clearly \eqref{eq:lemma:pair of lipschitz vectors} holds true for $(\omega_1,\omega_2)\in C^2(\Omegabar)^2,$ by partial integration. Let us first show that \eqref{eq:lemma:pair of lipschitz vectors} holds true if $\omega_1\in C^2(\Omegabar)$ and $\omega_2$ is Lipschitz. Let us first assume that $\delomega$ is connected and hence there exists a Lipschitz curve $\varphi$ and some interval $[0,L]$ such that
\begin{equation}
 \label{eq:varphi param and periodic}
   \varphi:[0,L]\to \delomega,\qquad \varphi(0)=\varphi(L)
\end{equation}
is a parametrization of $\delomega.$ We obtain that $\omega_2\circ\varphi\in W^{1,\infty}([0,L]),$ as it is the composition of two Lipschitz functions, and it is differentiable almost everywhere in $[0,L]$ with
\begin{equation}
 \label{eq:omega circ phi deriv}
  \frac{d}{dt}(\omega_2\circ\varphi)(t)=\frac{\p \omega_2}{\p x_1}(\varphi(t))\varphi_1'(t)
  +\frac{\p \omega_2}{\p x_2}(\varphi(t))\varphi_2'(t)=
  \left(\frac{\p \omega_2}{\p x_1}\nu_1
  -\frac{\p \omega_2}{\p x_2}\nu_2\right)(\varphi(t))|\varphi'(t)|.
\end{equation}
We have assumed here that $\varphi$ turns around the domain counterclockwise.
Thus we obtain, using that $\varphi(0)=\varphi(L),$ $\omega_1\in C^2(\Omegabar)$  (and hence its second derivatives commute)
\begin{align*}
  \intdelomega \omega_1\left(\frac{\p \omega_2}{\p x_2}\nu_1-\frac{\p \omega_2}{\p  
  x_1}\nu_2\right)
  =&\int_0^L\omega_1(\varphi(t))\frac{d}{dt}\left[\omega_2(\varphi(t))\right]\,dt =
  -\int_0^L\frac{d}{dt}\left[\omega_1(\varphi(t))\right]\omega_2(\varphi(t))\,dt
  \smallskip \\
  =&
   -\intdelomega \omega_2\left(\frac{\p \omega_1}{\p x_2}\nu_1-\frac{\p \omega_1}{\p  
  x_1}\nu_2\right)=
  \intomega
  \left(\frac{\p \omega_1}{\p x_1}\frac{\p \omega_2}{\p x_2}-\frac{\p \omega_2}{\p  
  x_1}\frac{\p \omega_1}{\p x_2}\right).
\end{align*}
This proves the claim of the present step, in case $\delomega$ is connected. If $\delomega$ is not connected then we first show that on each connected component $S_i$ of $\delomega$ ($i=1,\ldots, K$ for some $K\in\mathbb{N}$)
$$
  \int_{S_i} \omega_1\left(\frac{\p \omega_2}{\p x_2}\nu_1-\frac{\p \omega_2}{\p  
  x_1}\nu_2\right)
 =
   -\int_{S_i} \omega_2\left(\frac{\p \omega_1}{\p x_2}\nu_1-\frac{\p \omega_1}{\p  
  x_1}\nu_2\right), 
$$
as before, taking periodic paramterizations $\varphi_i$ of $S_i$. Then we take the sum over these integrals and can proceed in the same way. This proves the claim of Step 1.
\smallskip

\textit{Step 2.} Let us now assume that $\omega_1\,,\omega_2$ are both Lipschitz. Take a sequence $\{\omega_1^k\}\in C^{\infty}(\Omegabar),$ $k\in \mathbb{N},$ such that
$$
   \omega_1^k\to\omega\quad\text{ in }W^{1,2}(\Omega)\quad\text{ for $k\to\infty$.}
$$
By Step 1 we have for each $k$
$$
     \intdelomega \omega_1^k\left(\frac{\p \omega_2}{\p x_2}\nu_1-\frac{\p \omega_2}{\p  
  x_1}\nu_2\right)
  =\intomega
  \left(\frac{\p \omega_1^k}{\p x_1}\frac{\p \omega_2}{\p x_2}-\frac{\p \omega_2}{\p  
  x_1}\frac{\p \omega_1^k}{\p x_2}\right).
$$
By the trace theorem $\omega_1^k\to\omega_1$ in $L^2(\delomega),$ and by \eqref{eq:omega circ phi deriv}
$$
   \left(\frac{\p \omega_2}{\p x_2}\nu_1-\frac{\p \omega_2}{\p  
  x_1}\nu_2\right)\in L^{\infty}(\delomega).
$$
So by letting $k\to\infty$ we obtain \eqref{eq:lemma:pair of lipschitz vectors}.
\end{proof}

\smallskip

We split the second proof of Propostion \ref{proposition:lambda wedge omega for vector fields} (which requires $\lambda$ to be $C^{1,1}$) into several intermediate steps. We first recall the definition of the pushforward of a vector field.

\begin{definition}
\label{def:pullback of vector field}
Let $U,V\subset\re^n$ be two open sets and $\Phi\in \operatorname{Diff}^1(U;V).$ Then for any $\omega\in C(U)^n$ we define its pushforward $\Phi_{\ast}(\omega)\in C(V)^n$ by
$$
  \Phi_{\ast}(\omega)(x)=\nabla \Phi\left(\Phi^{-1}(x)\right)\,\omega\left(\Phi^{-1}(x)\right),
$$
where $A\,b$ is the usual multiplication of a (column) vector $b$ by a matrix $A.$
\end{definition}

We will use several times the following elementary properties: $(\Phi\circ \Psi)_{\ast}(\omega)=\Phi_{\ast}(\Psi_{\ast}(\omega))$ and
\begin{equation}
  \label{eq:phi ast a times b stays zero}
   \alpha\times\beta=0\quad\text{ at }x\quad\Leftrightarrow\quad \Phi_{\ast}(\alpha)\times\Phi_{\ast}(\beta)=0\quad\text{ at }\Phi(x).
\end{equation}

The proof of the next lemma is a straightforward algebraic calculation. The analoguous result for the pullback of general $k$-forms can be found in \cite{Csato Thesis}, Lemma B.13. However, in the present case of vector fields, the proof is much simpler. $O(n)$ shall denote the set of  orthogonal matrices.

\begin{lemma}
\label{lemma:d delta nabla invariant in squar if SOn}
Let $U,V\subset\mathbb{R}^n$ be open sets, $A\in O(n),$ $b\in\re^n,$ and $\psi:U\to V=\psi(U)$ defined by $\psi(u)=Au+b.$ Then for all $\omega\in C^{0,1}(U)^n$ and almost every $u\in U$ the following three identities hold true:
\begin{equation}
 \label{eq:nabla square invariant under rotation-pullback}
  |\nabla\omega(u)|^2= |\nabla(\psi_{\ast}(\omega))|^2(\psi(u))
\end{equation}
\begin{equation}
 \label{eq:d square invariant under rotation-pullback}
  |\curl\omega(u)|^2= |\curl(\psi_{\ast}(\omega))|^2(\psi(u))
\end{equation}
\begin{equation}
 \label{eq:delta square invariant under rotation-pullback}
  |\diw\omega(u)|^2= |\diw(\psi_{\ast}(\omega))|^2(\psi(u)).
\end{equation}
\end{lemma}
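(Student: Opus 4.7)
The plan is a direct chain-rule computation followed by elementary invariance properties of the Frobenius norm and trace under orthogonal conjugation. First I would observe that $\psi^{-1}(x)=A^T(x-b)$ and $\nabla\psi\equiv A$, so Definition \ref{def:pullback of vector field} specializes to $\widetilde\omega(x):=\psi_{\ast}(\omega)(x)=A\,\omega(A^T(x-b))$. Applying the chain rule componentwise --- valid for a.e.\ $u\in U$ by Rademacher's theorem, since $\omega\in C^{0,1}(U)^n$ --- the key matrix identity is
\begin{equation*}
  \nabla\widetilde\omega(\psi(u))=A\,\nabla\omega(u)\,A^T,
\end{equation*}
whose $(i,j)$ entry is $\sum_{k,l}A_{ik}\bigl(\partial\omega_k/\partial u_l\bigr)(u)A_{jl}$.

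Once this is in hand, writing $M=\nabla\omega(u)$, all three claims follow one line each from $A^TA=AA^T=I$ and cyclicity of trace. For \eqref{eq:nabla square invariant under rotation-pullback}, the Frobenius norm is bi-invariant under orthogonal matrices: $|AMA^T|^2=\mathrm{tr}(AM^TA^T\,AMA^T)=\mathrm{tr}(M^TM)=|M|^2$. For \eqref{eq:delta square invariant under rotation-pullback}, the divergence is the trace of the Jacobian, so $\diw\widetilde\omega(\psi(u))=\mathrm{tr}(AMA^T)=\mathrm{tr}(M)=\diw\omega(u)$, and squaring yields the claimed identity. For \eqref{eq:d square invariant under rotation-pullback}, the paper's definition of $\curl$ gives $(\curl\omega)_{ij}=M_{ji}-M_{ij}$ for $i<j$, hence $2|\curl\omega|^2=|M^T-M|^2$; since $(AMA^T)^T-AMA^T=A(M^T-M)A^T$, the same Frobenius-norm invariance finishes the argument.

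The only real bookkeeping step is the chain-rule identity for $\nabla\widetilde\omega$; I do not anticipate any serious obstacle. The Lipschitz hypothesis is handled exactly by Rademacher's theorem, which legitimizes the pointwise chain rule at almost every $u\in U$; because $\psi$ is a bi-Lipschitz diffeomorphism, the set of bad points in $V=\psi(U)$ is likewise of measure zero, so all three identities hold almost everywhere as stated.
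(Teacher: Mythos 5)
Your proof is correct and follows essentially the same route as the paper's: the chain rule gives $\nabla(\psi_{\ast}\omega)(\psi(u))=A\,\nabla\omega(u)\,A^{T}$, and orthogonality of $A$ does the rest. The paper verifies the invariance by expanding the quadruple index sums and using $\sum_i a_{ik}a_{il}=\delta_{kl}$, which is exactly your Frobenius-norm/trace-cyclicity argument written out in coordinates; your matrix formulation (including $2|\curl\omega|^2=|M^{T}-M|^2$) is just a tidier packaging of the same computation.
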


\begin{remark}
This lemma holds true by the specific algebraic properties of $\nabla,$ $\curl$ and $\diw$ and is not valid in general for an arbitrary linear combination of derivatives of $\omega.$ In case of $\diw$ we have actually something stronger: $\diw\omega(u)=\diw(\psi_{\ast}(\omega))(\psi(u))$ for any invertible matrix $A.$
\end{remark}

\begin{proof}
We first prove \eqref{eq:nabla square invariant under rotation-pullback}.
Let $a_{ij}$ denote the entries of the matrix $A.$ Since $A^t=A^{-1}$ we have that for any $k,l=1,\ldots, n,$
\begin{equation}
 \label{eq:sum aik is zero or 1}
  \sum_{i=1}^na_{ik}a_{il}=\delta_{kl}.
\end{equation}
We can assume that $b=0.$ Let $x=\psi(u)=Au,$ and hence $\psi_{\ast}(\omega)(x)=A\omega(A^tx).$ So the components of $\psi_{\ast}(\omega),$ respectively their derivatives are
$$
  \left(\psi_{\ast}(\omega)\right)_i(x)=\sum_{k=1}^na_{ik}\omega_k(A^tx)
  \quad\text{ and }\quad
  \frac{\partial \left(\psi_{\ast}(\omega)\right)_i}{\partial x_j}(x)=\sum_{k,l=1}^na_{ik}a_{jl}\frac{\partial\omega_k}{\partial u_l}(A^tx).
$$
We therefore obtain
\begin{align*}
  |\nabla\psi_{\ast}(\omega)|^2(x)=\sum_{i,j=1}^n\left(\sum_{k,l=1}^na_{ik}a_{jl}\frac{\partial\omega_k}{\partial u_l}(u)\right)^2
  =\sum_{i,j=1}^n\sum_{k,l=1}^n\sum_{r,s=1}^na_{ik}a_{jl}a_{ir}a_{js}\frac{ \partial \omega_k}{\partial u_l}(u)\frac{\partial \omega_r}{\partial u_s}(u).
\end{align*}
Using now \eqref{eq:sum aik is zero or 1} gives the desired result.
To prove \eqref{eq:d square invariant under rotation-pullback} we use that
\begin{align*}
  |\curl(\psi_{\ast}(\omega))|^2=&\sum_{i<j}\left(\frac{\partial \left(\psi_{\ast}(\omega)\right)_j}{\partial x_i}- \frac{\partial \left(\psi_{\ast}(\omega)\right)_i}{\partial x_j}\right)^2=
  \frac{1}{2}\sum_{i,j=1}^n \left(\frac{\partial \left(\psi_{\ast}(\omega)\right)_j}{\partial x_i}- \frac{\partial \left(\psi_{\ast}(\omega)\right)_i}{\partial x_j}\right)^2
  \smallskip \\
  =&\frac{1}{2}\sum_{i,j=1}^n \left( \sum_{k,l=1}^na_{ik}a_{jl}\left(\frac{\partial\omega_k}{\partial u_l}-\frac{\partial \omega_l}{\partial u_k}\right)\right)^2
\end{align*}
and proceed as in the proof of \eqref{eq:nabla square invariant under rotation-pullback}. The proof of \eqref{eq:delta square invariant under rotation-pullback} is very similar.
\end{proof}
\smallskip

We start proving Proposition \ref{proposition:lambda wedge omega for vector fields} in a special case.

\begin{lemma}
\label{lemma:C is one if lambda constant}
Let $\Omega\subset\re^n$ be a bounded open Lipschitz set and let $\lambda\in \re^n$ be a nonzero constant vector. Then the equality
$$
  \intomega|\nabla\omega|^2=\intomega\left(|\curl \omega|^2+|\diw \omega|^2\right)
$$
holds true for all $\omega\in \mathbf{C^{1}}(\Omegabar)^n$ which satisfy
$\lambda\times\omega=0\quad\text{ on }\delomega.$
\end{lemma}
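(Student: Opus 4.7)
The plan is to exploit the identity derived in Step 1 of the first proof of Proposition \ref{proposition:lambda wedge omega for vector fields}, and show that when $\lambda$ is constant the boundary integrals on its right-hand side vanish pointwise. That identity, valid for any $\omega\in C^1(\Omegabar)^n$, reads
$$
  \intomega\left(|\curl\omega|^2+|\diw\omega|^2-|\nabla\omega|^2\right)
  =\sum_{i<j}\intdelomega\bigl(\omega_j\p_{ij}[\omega_i]-\omega_i\p_{ij}[\omega_j]\bigr),
$$
so it suffices to prove that each integrand vanishes on $\delomega$ under the hypothesis $\lambda\times\omega=0$.

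Since $\lambda\neq 0$ is a constant vector, pick an index $k$ with $\lambda_k\neq 0$. The condition $\lambda\times\omega=0$ on $\delomega$ reads $\lambda_i\omega_j=\lambda_j\omega_i$ for all $i,j$. Defining the scalar function $f:=\omega_k/\lambda_k$ on $\delomega$, we obtain $\omega_i=f\lambda_i$ on $\delomega$ for every $i=1,\dots,n$. Because $\p_{ij}$ is a tangential derivative depending only on boundary values (and because $\lambda_i,\lambda_j$ are constants), the product rule \eqref{eq:product rule for p ij tangential der} yields
$$
  \p_{ij}[\omega_i]=\lambda_i\,\p_{ij}[f]\andd \p_{ij}[\omega_j]=\lambda_j\,\p_{ij}[f]\quad\text{on }\delomega.
$$

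Substituting these expressions together with $\omega_i=f\lambda_i$, $\omega_j=f\lambda_j$ into the boundary integrand gives
$$
  \omega_j\p_{ij}[\omega_i]-\omega_i\p_{ij}[\omega_j]
  =(f\lambda_j)(\lambda_i\p_{ij}[f])-(f\lambda_i)(\lambda_j\p_{ij}[f])=0
$$
for every pair $i<j$. Hence the entire right-hand side of the identity is zero, and the asserted equality $\intomega|\nabla\omega|^2=\intomega(|\curl\omega|^2+|\diw\omega|^2)$ follows. There is no real obstacle here: the cancellation is purely algebraic and relies only on the constancy of $\lambda$ (so that $\p_{ij}$ passes through each $\lambda_i$) combined with the antisymmetry of the pair $(i,j)$ in the integrand. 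Note that a rotation via Lemma \ref{lemma:d delta nabla invariant in squar if SOn} would reduce matters to the case $\lambda=|\lambda|e_n$, but it is not needed for the argument above.
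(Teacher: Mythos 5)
Your argument is correct, and it rests on the same two pillars as the paper's proof: the integration-by-parts identity \eqref{eq:proof:first proof lambda wedge partial int} (valid for $\omega\in C^1(\Omegabar)^n$ by Step 1 of the first proof of Proposition \ref{proposition:lambda wedge omega for vector fields}) and the observation that each boundary integrand $\omega_j\p_{ij}[\omega_i]-\omega_i\p_{ij}[\omega_j]$ vanishes pointwise on $\delomega$. The only divergence is in how the general constant $\lambda$ is handled. The paper proves only the case $\lambda=e_1$, where the cancellation is immediate because $\omega_2=\cdots=\omega_n=0$ on $\delomega$ forces each product to have a vanishing factor, and it defers general $\lambda$ to a rotation argument via Lemma \ref{lemma:d delta nabla invariant in squar if SOn} together with \eqref{eq:phi ast a times b stays zero}. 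You instead treat general $\lambda$ directly by writing $\omega_i=f\lambda_i$ on $\delomega$ with $f=\omega_k/\lambda_k$ and using the product rule \eqref{eq:product rule for p ij tangential der}, so that the two terms cancel by the symmetry $f\lambda_i\lambda_j\p_{ij}[f]$. This is precisely the constant-$\lambda$ specialization of Step 2 of the first proof of Proposition \ref{proposition:lambda wedge omega for vector fields} (where the functions $\mu_i=\lambda_i/\lambda_k$ are constant, so the terms involving $\p_{ij}[\mu_i]$ drop out), and it buys you a self-contained proof that never invokes the rotation-invariance lemma; the paper's reduction to $e_1$ buys a cancellation that is visible without any computation. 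One cosmetic remark: $f$ should be read as the restriction to $\delomega$ of the globally defined $C^1$ function $\omega_k/\lambda_k$, so that $\p_{ij}[f]$ is literally defined; since $\p_{ij}$ depends only on boundary values this is harmless, exactly as you indicate.
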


\begin{remark}
We will only use this lemma for $\lambda=e_1$, and will therefore only prove that case. The result for general $\lambda$ follows easily from this particular case, Lemma \ref{lemma:d delta nabla invariant in squar if SOn} and \eqref{eq:phi ast a times b stays zero}.
\end{remark}

\begin{proof}
As remarked, we only prove the lemma in tha case when $\lambda=e_1=(1,0,\ldots,0).$ In this case the boundary condition $\lambda\times\omega=0$ is equivalent with $\omega_2=\cdots=\omega_n=0$ on $\delomega.$
Recall that, see \eqref{eq:proof:first proof lambda wedge partial int},
$$  \intomega\left(|\curl \omega|^2+|\diw \omega|^2-|\nabla\omega|^2\right)=
  -\sum_{i<j}
  \intdelomega \omega_i\p_{ij}[\omega_j]+\sum_{i<j}\intdelomega \omega_j \p_{ij}[\omega_i].
  $$
The right hand side of the previous equality cancels since, for any $i\neq j,$ $\omega_i\p_{ij}[\omega_j]$ is pointwise zero on $\delomega:$ indeed either $\omega_i=0$ on $\delomega$ or
$$
  \p_{ij}[\omega_j]=\frac{\partial\omega_j}{\partial x_i}\nu_j-\frac{\partial \omega_j}{\partial x_j}\nu_i=0\quad\text{ on }\delomega
$$
if $\omega_j=0$ on $\delomega$ recalling that $\p_{ij}[\omega_j]$ is a tangential derivative.
\end{proof}
\smallskip

The main statement of the next Lemma (Part (ii)), states that that the change of the $L^2$ norms of $\nabla\omega,$ $\curl\omega$ and $\diw\omega$ under the pushforward of $\Phi$ can be estimated appropriately, if $\nabla \phi\in SO(n)$ at some point and if a neighborhood is taken small enough near that point.

\begin{lemma}
\label{lemma:key lemma local boundary for lambda wedge 1form}
Let $x_0\in\re^n$ and  $\lambda$ be a $C^{1,1}$ vector field defined in a neighborhood of $x_0$, such that $|\lambda(x_0)|= 1.$
\smallskip

(i)
Then there exist open sets $O,W\subset\re^n,$ $x_0\in O,$ $0\in W,$  and a diffeomorphism $\Phi\in \operatorname{Diff}^{1,1}(\overline{O};\overline{W})$ such that $\Phi(x_0)=0$,
$$
  \Phi_{\ast}(\lambda)=e_1\quad\text{ in }W
  \quad\text{ and }\quad
  \nabla \Phi(x_0)\in SO(n).
$$

(ii)
Moreover for any $0<\epsilon\leq 1,$ up to taking $O$ and $W$ smaller, there exists a constant $C=C(\Phi)$ satisfying the following three inequalities:
\begin{equation}
   \label{eq:Difference between Norms of Gradients}
    \left|\int_O|\nabla\omega|^2-\int_W|\nabla(\Phi_{\ast}(\omega))|^2\right|\leq
    \epsilon\int_O|\nabla\omega|^2+\frac{C}{\epsilon}\int_O|\omega|^2
\end{equation}
\begin{equation}
  \label{eq:difference between d and d-ast}
    \left|\int_O|\curl\omega|^2-\int_W|\curl(\Phi_{\ast}(\omega))|^2\right|\leq
    \epsilon\int_O|\nabla\omega|^2+\frac{C}{\epsilon}\int_O|\omega|^2
\end{equation}
\begin{equation}
  \label{eq:difference between delta and delta-ast}
    \left|\int_O|\diw\omega|^2-\int_W|\diw(\Phi_{\ast}(\omega))|^2\right|\leq
    \epsilon\int_O|\nabla\omega|^2+\frac{C}{\epsilon}\int_O|\omega|^2
\end{equation}
for all $\omega\in C^{0,1}(\overline{O})^n.$
\end{lemma}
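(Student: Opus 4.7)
For part (i) I would use the flow-box theorem for $\lambda$, but with transverse coordinates chosen so that the Jacobian at $x_0$ is orthogonal. Since $|\lambda(x_0)|=1$, complete $\lambda(x_0)$ to a positively oriented orthonormal basis $\lambda(x_0), e_2', \ldots, e_n'$ of $\re^n$; with $\phi_t$ the local flow of $\lambda$, set
$$\Psi(t, s_2, \ldots, s_n) = \phi_t\bigl(x_0 + s_2 e_2' + \cdots + s_n e_n'\bigr).$$
Because $\lambda \in C^{1,1}$ the flow is $C^{1,1}$ jointly in its arguments, so $\Psi$ is a $C^{1,1}$ diffeomorphism between neighborhoods of $0$ and of $x_0$, and $\nabla\Psi(0)$ is the orthogonal matrix whose columns are $\lambda(x_0), e_2', \ldots, e_n'$. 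Taking $\Phi := \Psi^{-1}$ yields $\Phi(x_0)=0$, $\nabla\Phi(x_0) \in SO(n)$, and $\Phi_\ast\lambda = e_1$ since $\Psi$ pushes $\partial/\partial t$ to $\lambda$.

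For part (ii) I would first reduce to the case $\nabla\Phi(x_0) = I$. Set $T(x) := \Phi(x_0) + \nabla\Phi(x_0)(x - x_0)$; by part (i) $T$ is an affine isometry. Writing $\Phi = T \circ \widetilde\Phi$ gives $\Phi_\ast\omega = T_\ast(\widetilde\Phi_\ast\omega)$, and Lemma \ref{lemma:d delta nabla invariant in squar if SOn} applied to $T$ (combined with $|\det\nabla T|=1$) says that $\int|\nabla\cdot|^2,\int|\curl\cdot|^2,\int|\diw\cdot|^2$ are all invariant under the outer $T_\ast$. Thus it suffices to prove \eqref{eq:Difference between Norms of Gradients}--\eqref{eq:difference between delta and delta-ast} under the normalization $\nabla\Phi(x_0) = I$.

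Under this normalization, differentiating $(\Phi_\ast\omega)_i(y) = \sum_k \partial_k\Phi_i(\Phi^{-1}(y))\,\omega_k(\Phi^{-1}(y))$ and evaluating at $y=\Phi(x)$ gives, as a matrix identity,
$$\nabla(\Phi_\ast\omega)(\Phi(x)) = \nabla\Phi(x)\,\nabla\omega(x)\,\nabla\Phi(x)^{-1} + R(x)\,\omega(x),$$
with $R\in L^\infty$ built from $\nabla^2\Phi$ and $\nabla\Phi^{-1}$. Because $\Phi$ is $C^{1,1}$ and $\nabla\Phi(x_0)=I$, on a sufficiently small neighborhood we have $\nabla\Phi(x) - I = O(|x-x_0|)$ and $|\det\nabla\Phi(x)| = 1 + O(|x-x_0|)$. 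Direct expansion produces the pointwise bound
$$\bigl||\nabla(\Phi_\ast\omega)(\Phi(x))|^2 - |\nabla\omega(x)|^2\bigr| \le C|x-x_0|\,|\nabla\omega|^2 + C|\nabla\omega||\omega| + C|\omega|^2.$$
Young's inequality converts the cross term into $\tfrac{\epsilon}{2}|\nabla\omega|^2 + (C/\epsilon)|\omega|^2$, and shrinking $O$ so that $C\operatorname{diam}(O) < \epsilon/2$ absorbs the first term. Multiplying by the Jacobian and integrating over $O$ yields \eqref{eq:Difference between Norms of Gradients}. Inequalities \eqref{eq:difference between d and d-ast} and \eqref{eq:difference between delta and delta-ast} are obtained by exactly the same template, the only change being that the zeroth-order cancellation at $x_0$ now comes from \eqref{eq:d square invariant under rotation-pullback} and \eqref{eq:delta square invariant under rotation-pullback} rather than from \eqref{eq:nabla square invariant under rotation-pullback}.

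The delicate point is the reduction of part (ii) to the normalization $\nabla\Phi(x_0)=I$: one must verify simultaneously for all three operators ($\nabla$, $\curl$, $\diw$) that the outer affine factor $T$ preserves the relevant $L^2$ norm without distorting the error budget $\epsilon\int_O|\nabla\omega|^2+(C/\epsilon)\int_O|\omega|^2$ produced for $\widetilde\Phi$. This is exactly the content of Lemma \ref{lemma:d delta nabla invariant in squar if SOn}, and once it is invoked the remainder reduces to a first-order Taylor estimate for $\Phi$ near $x_0$ together with Young's inequality.
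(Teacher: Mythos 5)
Your proof is correct and follows essentially the same route as the paper: the flow-box construction with orthogonal initial data for (i), and for (ii) the decomposition of $\nabla(\Phi_{\ast}(\omega))$ into a conjugation term plus a zeroth-order term, Young's inequality for the cross term, Lemma \ref{lemma:d delta nabla invariant in squar if SOn} to handle the Jacobian at the base point, and shrinking $O$ using the continuity of $\nabla\Phi$. Your only deviation --- normalizing to $\nabla\Phi(x_0)=I$ by factoring out the affine isometry, rather than comparing the integrand directly with $\left|\nabla\Phi(x_0)\,\nabla\omega\,(\nabla\Phi(x_0))^{-1}\right|^2$ as the paper does --- is a cosmetic repackaging of the same use of that lemma.
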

\begin{remark}\label{remark:apres epsilon Phi} The proof will actually show that \eqref{eq:Difference between Norms of Gradients}-\eqref{eq:difference between delta and delta-ast} remain valid with the same constant $C$ replacing $O$ by any of its own open subsets $V$ and replacing $W$ by $U=\Phi(V).$
\end{remark}
\begin{proof} Without loss of generality we can assume that $x_0=0.$ \smallskip

\textit{Step 1.} We first prove (i).  Let $\tilde{\Psi}(t,x)$ be the solution of
$$
  \frac{\partial \tilde{\Psi}}{\partial t}=\lambda(\tilde{\Psi})\quad\text{ and }\quad \tilde{\Psi}(0,x)=Ax,
$$
where $A\in SO(n)$ is such that its first column is equal to $\lambda(x_0).$ Then define $\Psi(x)=\tilde{\Psi}(x_1,0,x_2,\cdots,x_n).$ It can be easily verified that $\Phi=\Psi^{-1}$ has all the desired properties.

\textit{Step 2.} We now prove (ii).
We will only do the proof for \eqref{eq:Difference between Norms of Gradients}. The proof for \eqref{eq:difference between d and d-ast} and \eqref{eq:difference between delta and delta-ast} is very similar.
Let $\Phi\in \operatorname{Diff}^{1,1}\left(\overline{O},\overline{W}\right)$ be as in (i) and $\Psi=\Phi^{-1}.$ Throughout the proof $C_1,C_2,C_3$ and $C_4$ will denote constants depending only on $\Phi.$
Let us write
\begin{align*}
  \nabla\left(\Phi_{\ast}(\omega)\right)(y)=&\nabla\left((\nabla \Phi\circ\Psi) (\omega\circ\Psi)\right)(y)
  \smallskip \\
  =& \sum_{k=1}^n S^k(\Phi,y)\omega_k(\Psi(y))+\nabla \Phi(\Psi(y))\,\nabla\omega(\Psi(y))\,\nabla \Psi(y),
\end{align*}
where $S^k(\Phi,y),$ $k=1,\ldots,n,$ are matrix valued functions depending only on derivatives of at most second order of $\Phi.$ Its entries shall be donoted by $S^k_{ij}(\Phi,y).$ So we have
\begin{equation}\label{eq:decomp.nabla.phi.ast}
  \left|\nabla\left(\Phi_{\ast}(\omega)\right)\right|^2=D+E+F,
\end{equation}
where
\begin{align*}
  D(y)=&\sum_{i,j=1}^n\Big(\nabla \Phi(\Psi(y))\,\nabla\omega(\Psi(y))\,\nabla \Psi(y)\Big)_{ij}^2,\qquad F(y)=\sum_{i,j=1}^n\left(\sum_{k=1}^nS_{ij}^k(\Phi,y)\omega_k(\Psi(y))\right)^2
  \smallskip \\
  E(y)=&2\sum_{i,j,k=1}^nS_{ij}^k(\Phi,y)\omega_k(\Psi(y))\left(\nabla \Phi(\Psi(y))\,\nabla\omega(\Psi(y))\,\nabla \Psi(y)\right)_{ij}
\end{align*}
Fix $0<\epsilon\leq 1.$ Using the inequality $2ab\leq a^2/\epsilon+b^2\epsilon$ and the fact that $\Phi$ is $C^{1,1},$ one immediately obtains
$$
  E(y)\leq C_1\epsilon|\nabla\omega|^2(\Psi(y))+\frac{C_1}{\epsilon}|\omega|^2(\Psi(y))\quad \text{and}\quad F(y)\leq C_2|\omega|^2(\Psi(y))\quad
  \quad\text{ for all $y\in \overline{O}.$}
$$
Changing the variables we therefore get
\begin{equation}\label{eq:E}\int_{W}E\leq \int_{O}\left(C_1\epsilon|\nabla\omega|^2(x)+\frac{C_1}{\epsilon}|\omega|^2(x)\right)\det\nabla\Phi(x)dx
\leq\int_{O}\left(C_3\epsilon|\nabla\omega|^2+\frac{C_3}{\epsilon}|\omega|^2\right)
\end{equation}
and similarly
\begin{equation}\label{eq:F}
\int_{W}F\leq\int_{O}C_4|\omega|^2.
\end{equation}
Combining \eqref{eq:decomp.nabla.phi.ast}, \eqref{eq:E} and \eqref{eq:F} it is enough to estimate
$$
\left|\int_WD-\int_O|\nabla\omega|^2\right|
$$
to prove \eqref{eq:Difference between Norms of Gradients}.
By the change of variables formula we get
$$
\int_WD=\int_O|\nabla \Phi(x)\,\nabla\omega(x)\,(\nabla \Phi(x))^{-1}|^2\det\nabla\Phi(x)dx.
$$
Thus
\begin{align*}\int_WD-\int_O|\nabla\omega|^2&=\int_{O}\left(\left|\nabla \Phi(x)\,\nabla\omega(x)\,(\nabla \Phi(x))^{-1}\right|^2\det\nabla\Phi(x)-\left|
   \nabla \Phi(0)\,\nabla\omega(x)\,(\nabla \Phi(0))^{-1}\right|^2\right)dx\\
   &+\int_{O}\left(\left|\nabla \Phi(0)\,\nabla\omega(x)\,(\nabla \Phi(0))^{-1}\right|^2-|\nabla\omega(x)|^2\right)dx.
\end{align*}
It follows from \eqref{eq:nabla square invariant under rotation-pullback} that the integrand in the second integral of the right-hand side of the previous equation is pointwise 0 in $O.$ To see this, fix $x\in O,$ set $A=\nabla \Phi(0)\in SO(n)$ and apply the map $\psi(u)=Au$
to Lemma \ref{lemma:d delta nabla invariant in squar if SOn}: then \eqref{eq:nabla square invariant under rotation-pullback} evaluated at $u=x$ gives
$$
  |\nabla\omega(x)|^2=|\nabla(\psi_{\ast}(\omega))|^2(\psi(x))=\left|A\, \nabla\omega(x)\,A^t\right|^2=\left|\nabla \Phi(0)\,\nabla\omega(x)\,(\nabla \Phi(0))^{-1}\right|^2.
$$
Hence, recalling that $\det\nabla\Phi(0)=1$, it follows from continuity of $\nabla \Phi$ that, taking $O$ smaller (and consequently $W$ as well) if necessary, that
$$
  \left|\int_WD-\int_O|\nabla\omega|^2\right|\leq \epsilon \int_{O}|\nabla\omega|^2.
$$
This concludes the proof of the lemma since the estimates on $E$ and $F$ remain valid for the new smaller open sets $O$ and $W$ with the same constants $C_1,C_2,C_3$ and $C_4.$
\end{proof}

\smallskip

We now prove Proposition \ref{proposition:lambda wedge omega for vector fields} in the special case when the vector fields $\omega$ have compact support in a sufficiently small neighborhood of a boundary point $x_0\in\delomega.$

\begin{lemma}
\label{lemma:theorem on lambda wedge near x0}
Let $\Omega\subset\re^n$ be a bounded open Lipschitz set and $\lambda\in C^{1,1}(\delomega)^n$ be such that $\lambda\neq 0$ on $\delomega$ and assume that $x_0\in\delomega.$ Then there exists an open set $O\in \re^n,$ $x_0\in O$ and a constant $C=C(\Omega,O,\lambda)$ such that
$$
  \int_V|\nabla\omega|^2\leq C\int_V\left(|\curl\omega|^2+|\diw\omega|^2+|\omega|^2\right),
$$
where $V=\Omega\cap O,$ for all $\omega\in \mathbf{C^{1}}\left(\overline{O}\right)$ which satisfy
$$
  \lambda\times\omega=0\quad\text{ on }\delomega\quad\text{ and }\quad \operatorname{supp}(\omega)\subset O.
$$
\end{lemma}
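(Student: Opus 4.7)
The plan is to use the diffeomorphism from Lemma \ref{lemma:key lemma local boundary for lambda wedge 1form} to straighten $\lambda$ into the constant vector field $e_1$ near $x_0$, apply the \emph{equality} in Lemma \ref{lemma:C is one if lambda constant} on the image side, and then use the estimates \eqref{eq:Difference between Norms of Gradients}--\eqref{eq:difference between delta and delta-ast} (together with Remark \ref{remark:apres epsilon Phi}) to transport the equality back to $V$ with an error that can be absorbed into the left-hand side.

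First, after rescaling $\lambda$ by $1/|\lambda(x_0)|$ (which leaves the condition $\lambda\times\omega=0$ unchanged) and extending $\lambda$ to $\re^n$ via Definition \ref{definition:C r alpha on Lipschitz}, Lemma \ref{lemma:key lemma local boundary for lambda wedge 1form} produces open sets $x_0\in O$, $0\in W$ and a diffeomorphism $\Phi\in\operatorname{Diff}^{1,1}(\overline O;\overline W)$ with $\Phi_\ast(\lambda)=e_1$ in $W$ and $\nabla\Phi(x_0)\in SO(n)$. I would then set $V=\Omega\cap O$, $U=\Phi(V)$ (a bounded open Lipschitz set) and $\tilde\omega=\Phi_\ast(\omega)\in C^1(\overline U)^n$.

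The main thing to verify is that $e_1\times\tilde\omega=0$ on \textbf{all} of $\partial U$, so that Lemma \ref{lemma:C is one if lambda constant} is applicable on $U$. I would decompose $\partial U\subset\Phi(\delomega\cap\overline O)\cup\Phi(\overline V\cap\partial O)$: on the first piece the boundary condition on $\omega$ together with \eqref{eq:phi ast a times b stays zero} gives $e_1\times\tilde\omega=0$; on the second piece the support assumption $\supp(\omega)\subset O$ forces $\omega$, and hence $\tilde\omega$, to vanish in a neighborhood, so the condition holds trivially. Lemma \ref{lemma:C is one if lambda constant} then yields
$$\int_U|\nabla\tilde\omega|^2=\int_U\left(|\curl\tilde\omega|^2+|\diw\tilde\omega|^2\right).$$

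Finally, fixing $\epsilon\in(0,1]$ and invoking the three inequalities of Lemma \ref{lemma:key lemma local boundary for lambda wedge 1form}(ii) (valid on $V$ and $U$ by Remark \ref{remark:apres epsilon Phi}), chained with the above equality, will give
$$\int_V|\nabla\omega|^2\leq\int_V\left(|\curl\omega|^2+|\diw\omega|^2\right)+3\epsilon\int_V|\nabla\omega|^2+\frac{3C}{\epsilon}\int_V|\omega|^2,$$
and choosing $\epsilon$ small enough to absorb $3\epsilon\int_V|\nabla\omega|^2$ into the left-hand side will complete the proof. The only delicate point I anticipate is the boundary check above: one must notice that the support hypothesis is precisely what bridges the gap between having $\lambda\times\omega=0$ only on $\delomega\cap O$ (and not on all of $\partial V$) and needing $e_1\times\tilde\omega=0$ on the full boundary $\partial U$ so that Lemma \ref{lemma:C is one if lambda constant} truly applies.
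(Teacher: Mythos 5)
Your proposal is correct and follows essentially the same route as the paper's own proof: straighten $\lambda$ to $e_1$ via Lemma \ref{lemma:key lemma local boundary for lambda wedge 1form}, use the support hypothesis to get $e_1\times\Phi_{\ast}(\omega)=0$ on all of $\partial U$ so that the equality of Lemma \ref{lemma:C is one if lambda constant} applies, and then transport back with the three $\epsilon$-inequalities and Remark \ref{remark:apres epsilon Phi} (the paper fixes $\epsilon=1/6$ to absorb the $3\epsilon$ term). The boundary check you flag as the delicate point is exactly the step the paper makes, phrased as ``since $\omega$ has compact support in $O$, then $\Phi_{\ast}(\omega)$ has compact support in $\Phi(O)$.''
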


\begin{proof}
The proof follows from Lemmas \ref{lemma:C is one if lambda constant} and \ref{lemma:key lemma local boundary for lambda wedge 1form}. With no loss of generality we can assume that $|\lambda(x_0)|=1.$  We claim that $O$ given by  Lemma \ref{lemma:key lemma local boundary for lambda wedge 1form} will have the desired property and we shall use the notation of that lemma. If $\lambda\times\omega=0$ on $\delomega,$ we get that, using \eqref{eq:phi ast a times b stays zero},
$$
  e_1\times \Phi_{\ast}(\omega)=0\quad\text{ on }\Phi(\partial\Omega\cap O)
$$
Also, since $\omega$ has compact support in $O,$ then $\Phi^{\ast}(\omega)$ has compact support in $\Phi(O).$ We conclude that, setting $U=\Phi(V),$
$$
  e_1\times\Phi_{\ast}(\omega)=0\quad\text{ all over }\partial U
$$
for any $\omega$ satisfying the assumptions of the lemma. We thus conclude from Lemma \ref{lemma:C is one if lambda constant} that
$$
  \int_U|\nabla(\Phi_{\ast}(\omega))|^2=\int_U\left(|\curl(\Phi_{\ast}(\omega))|^2+ |\diw(\Phi_{\ast}(\omega))|^2\right).
$$
Finally using \eqref{eq:Difference between Norms of Gradients}--\eqref{eq:difference between delta and delta-ast} (and Remark \ref{remark:apres epsilon Phi}) with $\epsilon=1/6$ and the previous equality we obtain that
\begin{align*}
  \int_V|\nabla\omega|^2\leq & \epsilon \int_V|\nabla\omega|^2+\frac{C}{\epsilon}\int_V|\omega|^2+\int_U|\nabla(\Phi^{\ast}(\omega))|^2
  \smallskip
  \\
  \leq & 3\epsilon \int_V|\nabla\omega|^2+3\frac{C}{\epsilon}\int_V|\omega|^2+ \int_V\left(|\curl\omega|^2+|\diw\omega|^2\right)
  \smallskip
  \\
  =&\frac{1}{2}\int_V|\nabla\omega|^2+18C\int_V|\omega|^2+\int_V \left(|\curl\omega|^2+|\diw\omega|^2\right).
\end{align*}
which proves the lemma.
\end{proof}
\smallskip

We give the second proof of the main proposition under the more restrictive hypothesis that $\lambda\in C^{1,1}(\partial\Omega)^n.$
\smallskip

\begin{proof}[Second Proof (Proposition \ref{proposition:lambda wedge omega for vector fields}).]
Since $\delomega$ is compact, we can cover it by open neighborhoods $O_i\subset\re^n,$ $i=1,\ldots,M$ which satisfy the conclusion of Lemma \ref{lemma:theorem on lambda wedge near x0}. Moreover let us choose a further open set $O_0\subset \overline{O_0}\subset\Omega$ such that
$
  \Omega\subset\cup_{i=0}^MO_i.
$
Let $\{\xi_i\}_{i=0}^M$ be a partition of unity  subordinate to the $O_i$:
$$
  0\leq\xi_i\leq 1,\qquad\operatorname{supp}(\xi_i)\subset O_i\quad\text{ and }\quad \sum_{i=0}^M\xi_i=1\quad\text{ in $\Omegabar$}.
$$
Let now $\omega\in C^{1}\left(\Omegabar\right)^n$ be a vector field such that $\lambda\times\omega=0$ on $\delomega.$ Then using Lemma
\ref{lemma:C is one if lambda constant} for $i=0$, respectively Lemma \ref{lemma:theorem on lambda wedge near x0} for $i=1,\ldots,M,$ we obtain that
\begin{equation}
 \label{eq:proof:local estimate in Vi}
  \int_{V_i}|\nabla(\xi_i\omega)|^2\leq C_i\int_{V_i}\left(|\curl(\xi_i\omega)|^2 +|\diw(\xi_i\omega)|^2+|\xi_i\omega|^2\right),
\end{equation}
for some constants $C_i=C_i(\Omega,\lambda),$ where $V_i=\Omega\cap O_i$. Note that
\begin{equation}
 \label{eq:proof:local global Vi estimate for nabla}
  \intomega|\nabla\omega|^2= \intomega\left|\nabla\left(\sum_{i=0}^M\xi_i\omega\right) \right|^2 \leq M\sum_{i=0}^M\intomega |\nabla(\xi_i\omega)|^2
  =M\sum_{i=0}^M\int_{V_i}|\nabla(\xi_i\omega)|^2.
\end{equation}
Thus combining \eqref{eq:proof:local global Vi estimate for nabla} and \eqref{eq:proof:local estimate in Vi} one gets
$$
  \intomega|\nabla\omega|^2\leq C_1\sum_{i=0}^M\int_{V_i}\left(|\curl\xi_i\omega|^2+|\diw \xi_i\omega|^2+|\xi\omega|^2\right)
  \leq C_2\intomega \left(|\curl\omega|^2+|\diw\omega|^2+|\omega|^2\right),
$$
for some constants $C_1$ and $C_2$ depending only on $\Omega$ and $\lambda.$
\end{proof}
\smallskip

To extend Proposition \ref{proposition:lambda wedge omega for vector fields} to $H^1$ vector fields we need to show that a vector field $\omega\in H^1(\Omega)^n$ which satisfies $\lambda\times\omega=0$ on the boundary
can be approximated by $C^{1}$ vector fields also satisfying
the same boundary condition. This is possible according to the next proposition.

\begin{proposition}
\label{prop:approx of lambda times omega zero by smooth}
Let $n\geq 2,$ $r\geq 0$ be an integer and $0\leq \alpha\leq 1,$ with $r+\alpha\geq 1.$ Suppose $\Omega\subset\re^n$ is a bounded open Lipschitz set and
$\lambda\in C^{r,\alpha}(\delomega)^n$ be such that
$$
  \lambda\neq 0\quad\text{ on }\delomega.
$$
Suppose $\omega\in H^1(\Omega)^n$ is such that $\lambda\times\omega=0$ on $\delomega.$ Then there exists a 
sequence $\{\omega^k\}_{k\in\mathbb{N}}
\subset C^{r,\alpha}(\Omegabar)^n$ such that for $k\to\infty$
$$
  \omega^k\to\omega\in H^1(\Omega)^n
  \quad\text{ and }\quad
  \lambda\times\omega^k=0\quad\text{ on }\delomega\text{ for all }k.
$$
\end{proposition}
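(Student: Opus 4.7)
The plan is to split $\omega$ near $\delomega$ into the component parallel to $\lambda$ and the component orthogonal to it, and to approximate the two parts by different density theorems. First extend $\lambda$ from $\delomega$ to a $C^{r,\alpha}$ vector field on all of $\re^n$ via Definition \ref{definition:C r alpha on Lipschitz}; by continuity $\lambda\neq 0$ on some open neighborhood $U$ of $\delomega,$ and I fix a cutoff $\chi\in C_c^{\infty}(U)$ with $\chi\equiv 1$ on a smaller neighborhood of $\delomega.$ Since $r+\alpha\geq 1$ the extended $\lambda$ is Lipschitz on $U,$ and so is $1/|\lambda|^2.$

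On $U\cap\Omega$ define
$$
  \mu=\frac{\langle\omega,\lambda\rangle}{|\lambda|^2},\qquad \eta=\omega-\mu\lambda.
$$
Since multiplication of an $H^1$ function by a bounded Lipschitz one stays in $H^1,$ both $\mu\in H^1(U\cap\Omega)$ and $\eta\in H^1(U\cap\Omega)^n.$ The key observation is that the boundary condition $\lambda\times\omega=0$ on $\delomega$ (which makes sense via $L^2$ traces) is equivalent to $\eta=0$ on $\delomega,$ simply because $\lambda\neq 0$ there. Extending $\chi\mu$ and $\chi\eta$ by zero outside $U\cap\Omega,$ the first lies in $H^1(\Omega)$ and the second in $H^1_0(\Omega)^n,$ and one has the decomposition
$$
  \omega=(\chi\mu)\lambda+\chi\eta+(1-\chi)\omega\quad\text{in }\Omega.
$$

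Now I approximate in $H^1$: the scalar $\chi\mu$ by a sequence $\psi^k\in C^{\infty}(\Omegabar)$ (density of smooth-up-to-boundary functions in $H^1(\Omega)$ on Lipschitz domains); the vector $\chi\eta\in H^1_0(\Omega)^n$ by $\xi^k\in C^{\infty}_c(\Omega)^n$; and the compactly supported vector $(1-\chi)\omega$ by $\zeta^k\in C^{\infty}_c(\Omega)^n$ via standard mollification. Setting
$$
  \omega^k:=\psi^k\lambda+\xi^k+\zeta^k,
$$
the product $\psi^k\lambda$ is $C^{r,\alpha}$ because $\lambda$ is, so $\omega^k\in C^{r,\alpha}(\Omegabar)^n.$ On $\delomega$ we have $\xi^k=\zeta^k=0,$ hence $\omega^k|_{\delomega}=(\psi^k\lambda)|_{\delomega}$ is collinear with $\lambda,$ giving $\lambda\times\omega^k=0$ there. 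Convergence $\omega^k\to\omega$ in $H^1(\Omega)^n$ follows term by term, using that $\lambda$ is bounded and Lipschitz.

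The principal obstacle is just verifying that $\chi\eta$ (extended by zero) indeed belongs to $H^1_0(\Omega)^n,$ which reduces to the standard fact that on a bounded Lipschitz domain $H^1_0(\Omega)$ coincides with the subspace of $H^1(\Omega)$ of zero trace. The rest is routine density and product estimates; the geometric content is packaged into the identification of the boundary condition with the vanishing of $\eta.$
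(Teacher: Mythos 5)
Your proof is correct, but it takes a genuinely different route from the paper's. The paper argues locally: it covers $\delomega$ by neighborhoods $W$ on each of which some fixed component $\lambda_{1}$ is bounded away from zero, introduces the scalars $\alpha_i=\lambda_1\omega_i-\lambda_i\omega_1=(\lambda\times\omega)_{1i}\in H^1_0(\Omega\cap W)$, approximates these by $C^\infty_c$ functions and $\omega_1$ by functions smooth up to the boundary, reconstructs $\omega^k_i=(\alpha^k_i+\lambda_i\beta^k)/\lambda_1$, and finally glues with a partition of unity. You instead perform a single global splitting $\omega=\mu\lambda+\eta$ with $\mu=\langle\omega,\lambda\rangle/|\lambda|^2$ and $\eta$ pointwise orthogonal to $\lambda$, observe that the boundary condition is exactly $\operatorname{tr}\eta=0$, and approximate the three pieces $(\chi\mu)\lambda$, $\chi\eta$ and $(1-\chi)\omega$ by the appropriate dense classes. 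Both arguments ultimately rest on the same two facts --- density of $C^\infty_c(\Omega)$ in the zero-trace subspace of $H^1(\Omega)$ on a Lipschitz domain, and density of $C^\infty(\Omegabar)$ in $H^1(\Omega)$ --- but your use of $|\lambda|^2$ in place of a single nonvanishing component removes the need to localize along $\delomega$, so you only need one cutoff near the boundary rather than a full covering argument; moreover you invoke the zero-trace characterization of $H^1_0$ on $\Omega$ itself rather than on the intersections $\Omega\cap W$, which is slightly cleaner since those intersections need not themselves be Lipschitz. The price is that your argument leans on the multiplicativity of the trace with respect to Lipschitz factors (to identify $\operatorname{tr}\eta$ with the orthogonal projection of $\operatorname{tr}\omega$) and on the lower bound $|\lambda|\geq\epsilon>0$ on a neighborhood of the compact set $\delomega$; both are standard and you have flagged them. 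All regularity bookkeeping ($\psi^k\lambda\in C^{r,\alpha}(\Omegabar)^n$ because $r+\alpha\geq1$ makes $\lambda$ Lipschitz, and $H^1$-continuity of multiplication by a fixed $W^{1,\infty}$ field) checks out.
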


\begin{proof}\textit{Step 1.} We first prove the following claim:
For every $x_0\in\delomega$ there exists a neighborhood $W\subset\re^n$ of $x_0$ such that for all $\omega\in H^1(\Omega)$ satisfying
\begin{equation}
 \label{additional assumption supp omega in W}
  \operatorname{supp}(\omega)\subset W\quad\text{ and }\quad \lambda\times \omega=0\quad\text{ on }\delomega,
\end{equation}
there exists a sequence 
$\{\omega^k\}_{k\in\mathbb{N}}\subset C^{r,\alpha}\left(\overline{\Omega\cap W}\right)^n$ such that
$$
  \omega^k\to \omega\quad\text{ in }H^1\left(\Omega\cap W\right)^n\quad\text{ and }\quad \lambda\times \omega^k=0\quad\text{ on }\delomega\cap W\text{ for all }k.
$$
We extend $\lambda$ to a $C^{r,\alpha}$ vector field in $\re^n,$
see Definition \ref{definition:C r alpha on Lipschitz}.
Since $\lambda$ does not vanish on the boundary we can assume with no loss of generality that
$\lambda_1\neq 0$ in $\overline{W}$ where $W$ is a small enough neighborhood of $x_0.$
Let us define
$$
  \alpha_i=\lambda_1\omega_i-\lambda_i\omega_1=(\lambda\times\omega)_{1i}\,.
$$
Note that by the additional assumptions \eqref{additional assumption supp omega in W}  the support of $\omega$ is contained in  $ W,$ and in particular vanishes on $\partial W.$ Therefore $\alpha_i\in H_0^1(\Omega\cap W)$ and hence there exists a sequence $\alpha_i^{k}$ with the properties
$$
  \{\alpha_i^{k}\}_{k\in \mathbb{N}}\in C^{\infty}_c(\Omega\cap W),\qquad \alpha_i^{k}\to \alpha_i\quad\text{ in }H^1(\Omega\cap W).
$$
Moreover we choose a sequence $\{\beta^{k}\}\in C^{\infty}(\overline{\Omega\cap W})$ such that
$\beta^{k}\to \omega_1$ in $H^1(\Omega\cap W).$
We finally define $\omega^{k}=(\omega_1^{k},\ldots,\omega_n^{k})$ by
\begin{align*}
 \omega_1^{k}=&\beta^{k}\smallskip
 \\
 \omega_i^{k}=&\frac{\alpha_i^{k}+\lambda_i\beta^{k}}{\lambda_1}
                        \quad\text{ for }i=2,\ldots,n.
\end{align*}
Using that $\alpha_i^{k}=0$ on $\delomega\cap W$ we obtain that for any $i,j\in\{1,\ldots,n\}$
$$
  \lambda_j\omega_i^{k}-\lambda_i\omega_j^{k}= \frac{\lambda_j}{\lambda_1}\lambda_i\beta^{k}-\frac{\lambda_i}{\lambda_1}\lambda_j\beta^{k}=0\quad\text{ on }\delomega\cap W\,.
$$
and thus $\omega^k$ has all the desired properties claimed in Step 1.\smallskip

\textit{Step 2.}
Using that $\delomega$ is compact, we can cover it by a finite number of open sets $W_1,\ldots,W_L$ with the properties given by Step 1. Clearly we can add $W_0$ such that $W_0$ is also open, $\Omegabar\subset\bigcup_{l=0}^LW_l$
and any $\omega^0\in H^1(W_0)$ with compact support in $W_0$ can be approximated by smooth vector fields $\omega^{0,k}$ with compact support in $W_0$. In particular $\lambda\times\omega^{0,k}=0$ on $\delomega$ for all $k.$ Let $\eta_l$ be a smooth partition of unity subordinate to this covering such that
$$
  \sum_{l=0}^L\eta_l^2=1\quad\text{ in }\Omegabar.
$$
Define $\omega^l=\eta_l\omega.$ Using Step 1 there exists for
each $l=1,\ldots, L$ sequences $\{\omega^{l,k}\}_{k\in\mathbb{N}}$ of $C^{r,\alpha}$ vector fields such that for $k\to\infty$
$$
  \omega^{l,k}\to \omega^l\quad\text{ in }H^1(\Omega\cap W)\quad\text{ and }\quad \lambda\times \omega^{l,k}=0\quad\text{ on }\delomega\cap W\text{ for all }k.
$$
Then $\eta_l \omega^{l,k}\in C^{r,\alpha}(\overline{\Omega})^n$ is well defined and $\omega^k=\sum_{l=0}^L\eta_l \omega^{l,k}$
has all the desired properties.
\end{proof}

\section{Formulation in $\re^2$ for discontinuous $\lambda$}
\label{section n is 2}

In two dimensions we improve Theorem \ref{theorem:lambda wedge omega for vector fields}: we no longer require $\lambda$ to be continuous on the whole boundary, but still Lipschitz on different pieces of $\delomega.$
More precisely we make the following assumption.

\begin{assumption}\label{Assumption lambda not glob lip}
Assume that $\Omega\subset \re^2$ is a bounded open Lipschitz set, such that for some integer $N$
$$
  \delomega=\bigcup_{i=1}^N\overline{\Gamma}_i\quad\text{ and } \quad
  \overline{\Gamma}_{i}\cap \overline{\Gamma}_{i+1}=\{S_i\}\text{ for $i=1,\ldots,N$},
$$
where $\Gamma_i$ are disjoint open sets in $\delomega$ (with the convention that
$\Gamma_{N+1}=\Gamma_1$) and the $S_i$ are $N$ different points on the boundary, called vertices.
Let $\lambda_i\in C^{0,1}\left(\overline{\Gamma}_i\right)^2$ for $i=1,\ldots,N$ and define
$$
  \lambda:\bigcup_{i=1}^N\Gamma_i\to \re^2,
$$
by $\lambda^i=\lambda$ on $\Gamma_i.$ We also assume that
$$
  \lambda_i\neq 0\quad\text{ on }\overline{\Gamma}_i\,.
$$
\end{assumption}

Note that we allow that at a vertex $S_i$ the segments $\Gamma_i$ and $\Gamma_{i+1}$ can meet at an angle $\pi.$
In this setting we have the following theorem.

\begin{theorem}\label{theorem:lambda wedge omega polygonial in re2}
Let $\Omega$ and $\lambda$ be as in Assumption \ref{Assumption lambda not glob lip}.
Then there exists a constant $C=C(\Omega,\lambda)$ such that
\begin{equation}\label{eq:theorem:main polygonail Gaffney}
  \|\nabla\omega\|_{L^2(\Omega)}^2\leq C\left(\|\curl\omega\|_{L^2(\Omega)}^2+ \|\diw\omega\|_{L^2(\Omega)}^2+\|\omega\|_{L^2(\Omega)}^2\right),
\end{equation}
for all $\omega\in H^1(\Omega)^2$ which satisfy
$$
  \lambda\times\omega=0\quad\text{ on }\delomega,
$$
where the last equality is understood as
$\lambda_i\times\omega=0$ on $\Gamma_i$ for each $i=1,\ldots,N.$
\end{theorem}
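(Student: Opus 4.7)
The plan is to proceed in two phases: first, establish the estimate for Lipschitz vector fields $\omega$ satisfying the piecewise boundary condition by adapting the first proof of Proposition \ref{proposition:lambda wedge omega for vector fields}; second, approximate $H^1$ vector fields satisfying the piecewise condition by such Lipschitz ones, with the vertex behavior being the delicate point.

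For the Lipschitz case, fix $\omega\in C^{0,1}(\Omegabar)^2$ with $\lambda_i\times\omega=0$ on each $\Gamma_i$. By Lemma \ref{lemma:proof of identity for pair of lipscitz vectors} the identity \eqref{eq:proof:first proof lambda wedge partial int} remains valid:
$$
\intomega\bigl(|\curl\omega|^2+|\diw\omega|^2-|\nabla\omega|^2\bigr) = -\intdelomega \omega_1\p_{12}[\omega_2] + \intdelomega \omega_2\p_{12}[\omega_1].
$$
I would split the boundary integral as $\sum_{i=1}^N\int_{\Gamma_i}$. On each compact arc $\overline{\Gamma_i}$ the Lipschitz non-vanishing $\lambda_i$ admits a finite cover by open subarcs on which one component, say $\lambda_{i,1}$, is bounded below, so exactly as in Step 2 of the first proof of Proposition \ref{proposition:lambda wedge omega for vector fields} one can write $\omega_2=\mu_i\omega_1$ (or $\omega_1=\mu_i'\omega_2$) with $\mu_i\in C^{0,1}$. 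The product rule \eqref{eq:product rule for p ij tangential der} then reduces the integrand to $-\omega_1^2\p_{12}[\mu_i]$, bounded pointwise by $C(\Omega,\lambda)|\omega|^2$. Summing over $i$ and absorbing $\intdelomega|\omega|^2$ via the standard trace inequality (Step 3 of the same proof) yields \eqref{eq:theorem:main polygonail Gaffney} for Lipschitz $\omega$. The resulting constant depends only on the individual Lipschitz norms $\|\lambda_i\|_{C^{0,1}(\overline{\Gamma_i})}$ and on $\inf_{\overline{\Gamma_i}}|\lambda_i|$, so no smoothing of $\lambda$ across the vertices is required; this is what makes the piecewise situation accessible.

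For the approximation step, the key tool is that single points have zero $H^1$-capacity in $\re^2$. Take a smooth logarithmic cut-off $\chi_\epsilon$ vanishing on $\bigcup_i B_{\epsilon^2}(S_i)$ and equal to $1$ outside $\bigcup_i B_\epsilon(S_i)$, with $\|\nabla\chi_\epsilon\|_{L^2(\Omega)}\to 0$ as $\epsilon\to 0$. Scalar multiplication preserves the piecewise condition, so $\chi_\epsilon\omega$ satisfies $\lambda_i\times(\chi_\epsilon\omega)=0$ on each $\Gamma_i$, and (granted the $H^1$-convergence $\chi_\epsilon\omega\to\omega$) the problem reduces to approximating vector fields whose support avoids every vertex. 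For such an $\omega$, the sets $\overline{\Gamma_i}\cap\supp\omega$ are pairwise disjoint compact subarcs, so a partition of unity localizing to each of them, combined with Proposition \ref{prop:approx of lambda times omega zero by smooth} applied arc-by-arc (after extending each $\lambda_i$ to a globally Lipschitz field on $\delomega$ that agrees with $\lambda_i$ on a neighborhood of the relevant support), produces Lipschitz approximations satisfying the piecewise condition. A diagonal argument in $\epsilon$ and in the approximation parameter then closes the step.

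The main obstacle is the convergence $\chi_\epsilon\omega\to\omega$ in $H^1$, equivalently $\|\omega\nabla\chi_\epsilon\|_{L^2}\to 0$. A direct H\"older bound would demand $\omega\in L^\infty$ near each vertex, which is not available for general $\omega\in H^1(\Omega)^2$. The workaround is to first truncate $\omega$ by multiplying both components by a common Lipschitz function $\varphi_M(|\omega|)$: this preserves the piecewise boundary condition $\lambda_i\times\omega=0$, produces a bounded truncate to which the log cut-off argument applies, and converges back to $\omega$ in $H^1$ as $M\to\infty$. The two-dimensional nature is essential here, since in higher dimensions the vertex sets would have positive capacity and this cut-off scheme would collapse.
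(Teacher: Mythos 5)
Your first phase (the estimate for $\omega\in C^{0,1}(\Omegabar)^2$) is exactly the paper's argument: the integration-by-parts identity of Lemma \ref{lemma:proof of identity for pair of lipscitz vectors}, the splitting of the boundary integral over the arcs $\Gamma_i$, the arc-by-arc reduction of the integrand to $-\omega_1^2\,\p_{12}[\mu_i]$, and absorption via the trace inequality. Where you genuinely diverge is the approximation step. The paper's Proposition \ref{prop:approx of lambda times omega zero by smooth: polgynial dom} attacks each vertex $S_i$ head-on: it distinguishes the case where $\lambda^i(S_i)$ and $\lambda^{i+1}(S_i)$ are collinear (so that, after normalization, $\lambda$ is Lipschitz across the vertex and Proposition \ref{prop:approx of lambda times omega zero by smooth} applies verbatim) from the transversal case $\det(\lambda^i(S_i)|\lambda^{i+1}(S_i))\neq 0$, where it approximates the pair $p=\lambda^i\times\omega$, $q=\lambda^{i+1}\times\omega$ (vanishing on $\Gamma_i$, resp.\ $\Gamma_{i+1}$) by $C^1$ functions vanishing on the same arcs — invoking a density theorem of Bernard/Grisvard for Sobolev functions vanishing on part of the boundary — and then recovers $\omega^k$ by inverting the matrix $(\lambda^i|\lambda^{i+1})$. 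You instead excise the vertices altogether, using that points have zero $H^1$-capacity in $\re^2$, after first replacing $\omega$ by its radial truncation so that the logarithmic cut-off estimate $\|\omega\nabla\chi_\epsilon\|_{L^2}\leq M\|\nabla\chi_\epsilon\|_{L^2}\to 0$ applies; both operations multiply $\omega$ pointwise by a scalar and hence preserve the piecewise condition (to be pedantic, the Lipschitz object is the map $y\mapsto\varphi_M(|y|)y$ on $\re^2$, not the function $x\mapsto\varphi_M(|\omega(x)|)$, and one should note that traces commute with such Lipschitz compositions). Your route is correct and buys independence from the external density theorem and from any case analysis at the vertices, at the price of a capacity-plus-truncation detour and a double (or triple) diagonal extraction; the paper's route yields approximants in $C^{0,1}(\Omegabar)^2$ directly and keeps the constant's dependence transparent, but leans on the cited extension result. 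Both are intrinsically two-dimensional at this step, as you observe.
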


\begin{example}\label{remark:to main theorem polygonial in re2}
As a special case we obtain  Gaffney inequality with the classical boundary conditions in polygonial domains.
\end{example}

The proof of Theorem \ref{theorem:lambda wedge omega polygonial in re2} is essentially the same as the corresponding result for globally Lipschitz $\lambda:$  only the approximation result, i.e. the analogy to Proposition \ref{prop:approx of lambda times omega zero by smooth} has to be adapted. This is done in the next proposition.

\begin{proposition}
\label{prop:approx of lambda times omega zero by smooth: polgynial dom}
Let $\Omega$ and $\lambda$ be as in Assumption \ref{Assumption lambda not glob lip}.
Suppose $\omega\in H^1(\Omega)^2$ is such that $\lambda\times\omega=0$ on $\delomega$. Then there exists a sequence $\{\omega^k\}_{k\in\mathbb{N}}\subset C^{0,1}(\Omegabar)^2$ such that for $k\to\infty$
$$
  \omega^k\to\omega\in H^1(\Omega)^2
  \quad\text{ and }\quad
  \lambda\times\omega^k=0\quad\text{ on }\delomega\quad\text{ for all }k.
$$
\end{proposition}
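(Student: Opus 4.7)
My proof would mirror the structure of the proof of Proposition \ref{prop:approx of lambda times omega zero by smooth}: establish a local approximation result near each $x_0\in\delomega$, and then assemble via a partition of unity exactly as in Step 2 there. The new difficulty is at the vertices $S_i$, where the piecewise Lipschitz field $\lambda$ is discontinuous. For $x_0$ lying in the interior of a single arc $\Gamma_i$, I would choose a neighborhood $W$ of $x_0$ with $\overline{W}\cap\delomega\subset\Gamma_i$; then $\lambda$ restricted to $W\cap\delomega$ coincides with the Lipschitz vector field $\lambda^i$, which by Definition \ref{definition:C r alpha on Lipschitz} extends to a Lipschitz vector field on $\re^2$, and Step 1 of Proposition \ref{prop:approx of lambda times omega zero by smooth} applies verbatim.

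At a vertex $x_0=S_i$, I would use a two-level approximation. Given $\omega\in H^1(\Omega)^2$ with $\supp(\omega)\subset W$ satisfying $\lambda\times\omega=0$ on $\delomega$, the first step is to approximate $\omega$ in $H^1$ by vector fields $\omega^{(\ell)}$ which still satisfy the boundary condition and additionally vanish in a small ball around $S_i$. Setting $\omega^{(\ell)}:=(1-\sigma_\ell)\omega$ for a scalar logarithmic cutoff $\sigma_\ell$ equal to $1$ on some $B(S_i,\epsilon_\ell)$ and $0$ outside a slightly larger ball, the boundary condition is preserved since $\lambda\times\omega^{(\ell)}=(1-\sigma_\ell)(\lambda\times\omega)=0$ on $\delomega$ (scalar multiplication respects parallelism). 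Once $\omega^{(\ell)}$ vanishes near $S_i$, its boundary support is a disjoint union of two closed subsets of $\Gamma_i$ and $\Gamma_{i+1}$ respectively, each contained in the relative interior of a single arc where $\lambda^j$ is Lipschitz. A further partition of unity inside $W$ (with pieces supported on each side of $S_i$) together with the non-vertex case above then yields a $C^{0,1}$ approximation of $\omega^{(\ell)}$ preserving the boundary condition, and a diagonal extraction completes the vertex case.

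The main obstacle is the $H^1$-convergence $\omega^{(\ell)}\to\omega$, which reduces to showing $\int_\Omega|\omega|^2|\nabla\sigma_\ell|^2\to 0$. Since $|\nabla\sigma_\ell|$ is unbounded near $S_i$, this step genuinely uses that a point in $\re^2$ has zero $H^1$-capacity. I would estimate the integral by H\"older's inequality with an exponent depending on $\ell$ (say $p_\ell=1+O(1/\log\ell)$), combined with the two-dimensional Trudinger-type Sobolev bound $\|\omega\|_{L^q(\Omega)}\leq C\sqrt{q}\,\|\omega\|_{H^1(\Omega)}$, so that the product of the corresponding $L^{p_\ell}$-norms of $|\omega|^2$ and $|\nabla\sigma_\ell|^2$ tends to zero as $\ell\to\infty$. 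This capacity-type argument is precisely why the proposition is stated only in two dimensions.
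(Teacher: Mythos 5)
Your overall architecture (localize, treat non-vertex points exactly as in Proposition \ref{prop:approx of lambda times omega zero by smooth}, handle the vertices separately, reassemble by a partition of unity and a diagonal extraction) is sound, and the reduction of the vertex case to the two arcs after cutting $\omega$ off near $S_i$ is fine: the scalar cutoff does preserve $\lambda\times\omega=0$. The genuine gap is in the one analytic step that carries the whole vertex case, namely $\int_\Omega|\omega|^2|\nabla\sigma_\ell|^2\to 0$. The H\"older--Trudinger estimate you propose does not prove this. With the logarithmic cutoff on the annulus $B_{\epsilon}\setminus B_{\epsilon^2}$ one has $\bigl\||\nabla\sigma_\ell|^2\bigr\|_{L^{1+\delta}}\approx \delta^{-1}\epsilon^{-2\delta}(\log(1/\epsilon))^{-2}$ and $\|\omega\|^2_{L^{2(1+\delta)/\delta}}\lesssim \delta^{-1}\|\omega\|^2_{H^1}$, so the product is of order $e^{2\delta\log(1/\epsilon)}\delta^{-2}(\log(1/\epsilon))^{-2}\|\omega\|_{H^1}^2$, which is minimized at $\delta\sim 1/\log(1/\epsilon)$ and there equals a \emph{constant} times $\|\omega\|^2_{H^1}$; no choice of $p_\ell$ makes it tend to zero. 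The claim itself is true, but you need a sharper tool: either the critical Hardy--Leray inequality $\int_{B_{1/e}}|u|^2|x|^{-2}(\log(1/|x|))^{-2}\leq C\|u\|^2_{H^1}$, which shows the relevant integrand is a fixed $L^1$ function so that its integral over the shrinking annuli vanishes; or, more elementarily, first replace $\omega$ by its radial truncations $\omega\min(1,M/|\omega|)$, which converge in $H^1$, are pointwise parallel to $\omega$ (hence still satisfy $\lambda\times\omega=0$ on $\delomega$), and are bounded, after which $\int|\omega|^2|\nabla\sigma_\ell|^2\leq\|\omega\|^2_{\infty}\|\nabla\sigma_\ell\|^2_{L^2}\to 0$ is immediate. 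Without one of these additions the vertex case is not closed.

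For comparison, the paper avoids capacity arguments entirely. At a vertex it distinguishes whether $\lambda^i(S_i)$ and $\lambda^{i+1}(S_i)$ are linearly dependent. If they are, $\lambda$ can be renormalized to be Lipschitz across the vertex and the old proof applies verbatim. If they are independent, the matrix $M=(-\lambda^i_2,\lambda^i_1;-\lambda^{i+1}_2,\lambda^{i+1}_1)$ is invertible near $S_i$, and the vector problem is converted into approximating the two scalar functions $p=\lambda^i\times\omega$ and $q=\lambda^{i+1}\times\omega$, each of which vanishes on one arc only; a known density theorem for Sobolev functions vanishing on part of a Lipschitz boundary (Bernard, or Grisvard for polygons) then produces $p^k,q^k$, and $\omega^k=M^{-1}(p^k,q^k)^t$. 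Your route, once repaired, is self-contained and does not need that external density theorem, at the price of the Hardy/truncation argument; the paper's route is shorter but imports the scalar density result.
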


\begin{proof}
\textit{Step 1.} We first prove the following claim:
For every $x_0\in\delomega$ there exists a neighborhood $W\subset\re^2$ of $x_0$ such that for all $\omega\in H^1(\Omega)$ satisfying
\begin{equation}
 \label{equation:lemma approx local in poly omegaassumption}
  \operatorname{supp}(\omega)\subset W\quad\text{ and }\quad \lambda\times \omega=0\quad\text{ on }\delomega,
\end{equation}
there exists a sequence $\{\omega^k\}_{k\in\mathbb{N}}\subset C^{0,1}\left(\overline{\Omega\cap W}\right)^2$ such that
$$
  \omega^k\to \omega\quad\text{ in }H^1\left(\Omega\cap W\right)^2\quad\text{ and }\quad \lambda\times \omega^k=0\quad\text{ on }\delomega\cap W\text{ for all }k.
$$
The proof of this claim is the same as the proof of Proposition \ref{prop:approx of lambda times omega zero by smooth} if $x_0$ is not a vertex and so we can assume that $x_0=\overline{\Gamma}_i\cap\overline{\Gamma}_{i+1}$ is a vertex. Then we distinguish two cases.
\smallskip

\textit{Case 1.} We assume that $\lambda^i(x_0)$ and $\lambda^{i+1}(x_0)$ are linearly dependent.
Since the boundary condition $\lambda\times\omega=0$ is invariant under scaling or sign change of $\lambda,$ and neither $\lambda^i$ nor $\lambda^{i+1}$ vanish, we can assume that $\lambda^i(x_0)=\lambda^{i+1}(x_0).$ But then $\lambda$ is Lipschitz in $\overline{\Gamma}_i\cup\overline{\Gamma}_{i+1}$ and thus we can again proceed as in Proposition \ref{prop:approx of lambda times omega zero by smooth}.
\smallskip

\textit{Case 2.} We assume that $\det(\lambda^i(x_0)|\lambda^{i+1}(x_0))\neq 0.$
In this case we extend both $\lambda^i$ and $\lambda^{i+1}$ separately to $C^{0,1}$ vector fields defined in $\re^2.$ By continuity, there exists a neighborhood $W$ of $x_0$ such that $\det(\lambda^i|\lambda^{i+1})\neq 0$ in $\overline{W}.$
Let $\omega$ be a vector field satisying \eqref{equation:lemma approx local in poly omegaassumption}. Define the two functions
$$
 p=\lambda^i\times \omega\in H^1(\Omega),\quad\text{ and }\quad q=\lambda^{i+1}\times\omega\in H^1(\Omega),
$$
which can also be written in the matrix form
$$
  \left(\begin{array}{c}
         p \\ q
        \end{array}\right)
        =
        \left(\begin{array}{cc}
        -\lambda_2^{i} & \lambda_1^i
        \smallskip \\
        -\lambda_2^{i+1} & \lambda_1^{i+1}\end{array}\right)
        \left(\begin{array}{c}
         \omega_1 \\ \omega_2
        \end{array}\right)=M\,\left(\begin{array}{c}
         \omega_1 \\ \omega_2
        \end{array}\right).
$$
By an extension theorem (see Bernard \cite{Bernard J.M.} or Theorem 1.6.1 of Grisvard \cite{Grisvard Singularities in B} for polygonial domains) there exists sequences $\{p^k\}_{k\in \mathbb{N}},$ $\{q^k\}_{k\in \mathbb{N}}\in C^1(\Omegabar)$ such that both $p^k$ (respectively $q^k$) converges to $p$ (respectively $q$) in $H^1(\Omega)$ and
$$
  p^k=0\quad\text{ on }\Gamma_i\quad\text{ and }\quad q^k=0\quad\text{ on }\Gamma_{i+1}\qquad \text{ for all }k\in\mathbb{N}.
$$
Since $\det(\lambda^i|\lambda^{i+1})\neq 0$ on $\overline{W},$ we can define $\omega^k\in C^{0,1}(\overline{W\cap \Omega})$ by
$$
  \omega^k=M^{-1}\left(\begin{array}{c}
                        p^k \\ q^k
                       \end{array}
\right).
$$
Note that $\lambda^i\times\omega^k=p^k,$ respectively $\lambda^{i+1}\times\omega^k=q^k.$
It straightforward to check that $\omega^k$ has all the desired properties claimed by Step 1.
\smallskip
\\

\textit{Step 2.} We finally conclude exactly as in Step 2 of the proof of Proposition \ref{prop:approx of lambda times omega zero by smooth}.
\end{proof}

\smallskip

We now prove the main theorem of this section.

\smallskip

\begin{proof}[Proof of Theorem \ref{theorem:lambda wedge omega polygonial in re2}.]
Since $\Omega$ is a Lipschitz domain we can use partial integration and obtain  that
\begin{align*}
  \intomega\left(|\curl \omega|^2+|\diw \omega|^2-|\nabla\omega|^2\right)
  =&
  \int_{\delomega}\omega_1\left(\nu_1\frac{\partial \omega_2}{\partial x_2}-\nu_2\frac{\partial\omega_2}{\partial x_1}\right)
  -\int_{\delomega}\omega_2\left( \nu_1\frac{\partial \omega_1}{\partial x_2}-\nu_2\frac{\partial\omega_1}{\partial x_1}\right)
  \smallskip \\
  =&
  \sum_{i=1}^N\left[\int_{\Gamma_i}\omega_1\left(\nu_1\frac{\partial \omega_2}{\partial x_2}-\nu_2\frac{\partial\omega_2}{\partial x_1}\right)
  -\int_{\Gamma_i}\omega_2\left( \nu_1\frac{\partial \omega_1}{\partial x_2}-\nu_2\frac{\partial\omega_1}{\partial x_1}\right)\right]
\end{align*}
holds for any $\omega\in C^{0,1}\left(\Omegabar\right)^2,$ where the first equality is exactly as in Step 1 of the first proof of Proposition \ref{proposition:lambda wedge omega for vector fields}). We now proceed
as in Step 2 of the first proof of Proposition \ref{proposition:lambda wedge omega for vector fields}, working on each $\Gamma_i$ separately:
Using that each $\Gamma_i$ is a $C^{0,1}$ curve and that $\lambda^i$ does not vanish on $\Gamma_i$, one obtains that there exists a constant $C_1=C_1(\Omega,\lambda)>0$ such that
$$
  \intomega\left(|\curl\omega|^2+|\diw\omega|^2-|\nabla\omega|^2\right)\geq -C_1\intdelomega|\omega|^2
$$
for all $\omega\in C^{0,1}\left(\Omegabar\right)^2$ satisfying $\lambda\times\omega=0$ on $\delomega.$ This proves the Theorem for $C^{0,1}$ vector fields $\omega.$ The general case follows from Proposition \ref{prop:approx of lambda times omega zero by smooth: polgynial dom}
\end{proof}

\section{Counterexamples}

In view of Theorems \ref{theorem:lambda wedge omega for vector fields}, \ref{theorem:lambda wedge omega polygonial in re2} and the classical boundary condition $\langle\nu;\omega\rangle=0,$ 
one could expect that if $n\geq 3$ we also have a Gaffney inequality under the boundary condition
$$
  \langle\lambda;\omega\rangle=0\quad\text{ on }\delomega,
$$
if $\lambda$ does not vanish on $\delomega.$ This is however not true as shown by the following simple example.

\begin{example}\label{example:counter to lambda scal omega if n is 3}
Let $\Omega\subset\re^3$ be any bounded open smooth set and $\lambda=(0,0,1).$ Then there exists no constant $C=C(\Omega,\lambda)$ such that
$$
  \intomega|\nabla\omega|^2\leq C\intomega \left(|\curl\omega|^2+|\diw\omega|^2+|\omega|^2\right)
$$
for all $\omega\in C^2(\Omegabar;\re^n)$ satisfying $\langle\lambda;\omega\rangle=0$ on $\delomega.$ To see this take
$$
  \omega(x)=\left(e^{nx_1}\cos(nx_2),-e^{nx_1}\sin(nx_2),0\right).
$$
Then one easily verifies that $\diw\omega=0,$ $\curl\omega=0,$ $|\nabla\omega(x)|^2=2n^2e^{2nx_1}$ and $|\omega(x)|^2=e^{2nx_1}.$
Hence, as in \eqref{eq:intro:counterexample}, Gaffney inequality cannot hold.
\end{example}

The question also arises whether Theorem \ref{theorem:lambda wedge omega for vector fields} generalizes to differential forms of higher order (identifying vector fields with $1$-forms). This is also not true. More precisely we have the following counterexample for $2$-forms.

\begin{example}Let $n\geq 3,$ $\Omega\subset\re^n$ be a bounded open smooth set. Then there exists no constant $C=C(\Omega)$ such that
$$
  \intomega|\nabla\omega|^2\leq C\intomega\left(|d\omega|^2+|\delta\omega|^2+|\omega|^2\right)
$$
for all $\omega\in C^2(\Omegabar;\Lambda^2)$ such that $dx^3\wedge\omega=0$ on $\delomega.$ To see this take
$$
  \omega=e^{nx_1}\cos(nx_2)dx^1\wedge dx^3+e^{nx_1}\sin(nx_2)dx^2\wedge dx^3.
$$
One can verify that $d\omega=0$ and $\delta\omega=0.$ Thus one concludes exactly as in Example \ref{example:counter to lambda scal omega if n is 3}.
\end{example}
\smallskip

\noindent\textbf{Acknowledgements} The first author was supported by Chilean FONDECYT Iniciaci\'on grant nr. 11150017. 
He would also like to thank Olivier Kneuss and Wladimir Neves for the kind invitation and hospitality at the Universidad Federal de Rio de Janeiro in June 2016, during which a relevant part of this work was finalized. The third author, Dhanya R., was supported by INSPIRE faculty fellowship (DST/INSPIRE/04/2015/003221) when a part of this work was carried out.

Moreover we would like to thank Martin Werner Licht, whose question partially motivated this research and who pointed out to us the connenction with the  references \cite{Arnold Falk Winther} and \cite{Bonizzoni Buffa Nobile}.

\end{document}